\newcommand{\GI}[2][]{\sidenote[colback=yellow!20]{\textbf{GI\xspace #1:} #2}}
\renewcommand{\paragraph}[1]{\par\medskip\noindent\textbf{#1}\hspace{1ex}}
\newcommand{\sfrac}[2]{#1 / #2}
\newcommand\sqrt*[2][1/2]{#2^{#1}}
\DeclareBoldMathCommand{\bpi}{\pi}
\DeclareMathOperator{\vol}{vol}
\DeclareMathOperator{\rank}{rank}
\DeclareMathOperator{\tor}{tor}
\newdelim{\ip}{\langle}{\rangle}
\newcommand{\ab}{\mathrm{ab}}
\colorlet{darkgreen}{green!50!black}
\renewcommand{\setminus}{-}
\renewcommand{\downarrow}{\to}
\begin{document}
\title[Heat Kernels and Brownian Winding Numbers]{Long Time Asymptotics of Heat Kernels and Brownian Winding Numbers on Manifolds with Boundary}
\author[Geng]{Xi Geng\textsuperscript{1}}
\address{%
  \textsuperscript{1} Department of Mathematical Sciences,
  Carnegie Mellon University,
  Pittsburgh, PA 15213.}
\email{xig@andrew.cmu.edu}
\author[Iyer]{Gautam Iyer\textsuperscript{2}}
\address{%
  \textsuperscript{2} Department of Mathematical Sciences,
  Carnegie Mellon University,
  Pittsburgh, PA 15213.}
\email{gautam@math.cmu.edu}
\thanks{%
  This material is based upon work partially supported by
  the National Science Foundation under grant
  DMS-1252912 to GI,
  and the Center for Nonlinear Analysis.
}
\keywords{Heat Kernel, winding of Brownian motion.}
\subjclass[2010]{Primary
  58J35; 
  Secondary
  58J65. 
}
\begin{abstract}
  Let~$M$ be a compact  Riemannian manifold with smooth boundary.
  We obtain the exact long time asymptotic behaviour of the heat kernel on abelian coverings of $M$ with mixed Dirichlet and Neumann boundary conditions.
  As an application, we study the long time behaviour of the abelianized winding of reflected Brownian motions in~$M$.
  In particular, we prove a Gaussian type central limit theorem showing that when rescaled appropriately, the fluctuations of the abelianized winding are normally distributed with an explicit covariance matrix.
\end{abstract}
\maketitle

\section{Introduction.}

Consider a compact Riemannian manifold~$M$ with boundary.
We address the following questions in this paper:
\begin{enumerate}
  \item
    What is the long time asymptotic behaviour of the heat kernel on abelian covering spaces of~$M$, under mixed Dirichlet and Neumann boundary conditions?

  \item
    What is the long time behaviour of the abelianized winding of trajectories of normally reflected Brownian motion~$M$.
\end{enumerate}
Our main results are Theorem~\ref{t:hker} and Theorem~\ref{t:winding}, stated in Sections~\ref{s:mainthm} and~\ref{s:winding} respectively.
In this section we survey the literature and place this paper in the context of existing results.

\subsection{Long Time Behaviour of Heat Kernels on Abelian Covers.}

The short time behaviour of heat kernels has been extensively studied and is relatively well understood (see for instance~\cite{BerlineGetzlerEA92,Grigoryan99} and the references therein).
The exact long time behaviour, on the other hand, is subtly related to global properties of the manifold, and our understanding of it is far from being complete. 
There are several scenarios in which the long time asymptotics can be determined precisely.
The simplest scenario is when the underlying manifold is compact, in which case the long time asymptotics is governed by the bottom spectrum of the Laplace-Beltrami operator. The problem becomes highly non-trivial for non-compact manifolds. 
Li~\cite{Li86} determined the exact long time asymptotics on manifolds with nonnegative Ricci curvature, under a polynomial volume growth assumption. Lott~\cite{Lott92} and Kotani-Sunada~\cite{KotaniSunada00} determined the long time asymptotics on abelian covers of closed manifolds. In a very recent paper, Ledrappier-Lim~\cite{LedrappierLim15} established the exact long time asymptotics of the heat kernel of the universal cover of a negatively curved closed manifold, generalizing the situation for hyperbolic space with constant curvature.
We also mention that for non-compact Riemannian symmetric spaces, Anker-Ji~\cite{AnkerJi01} established matching upper and lower bounds on the long time behaviour of the heat kernel.

Since the work by Lott~\cite{Lott92} and Kotani-Sunada~\cite{KotaniSunada00} is closely related to ours, we describe it briefly here.
Let $M$ be a closed Riemannian manifold, and $\hat M$ be an abelian cover (i.e.\ a covering space whose deck transformation group is abelian).
The main idea in~\cite{Lott92,KotaniSunada00} is an exact representation of the heat kernel $\hat{H}(t,x,y)$, in terms of a compact family of heat kernels of sections of twisted line bundles over~$M$.
Precisely, the representation takes the form
\begin{equation*}
  \hat{H}(t,x,y)=\int_{\mathcal{G}}H_{\chi}(t,x,y) \, d\chi,
\end{equation*}
where $\mathcal{G}$ is a compact Lie group, and $H_\chi(t,x,y)$ is the heat kernel on sections of a twisted line bundle $E_\chi$ over $M$.
(This is described in detail in Section~\ref{s:liftedRep}, below.)
Since $M$ is compact, $H_\chi$ decays exponentially with rate $\lambda_{\chi, 0}$, the principal eigenvalue of the associated Laplacian~$\lap_\chi$.
Thus the long time behaviour of~$\hat H$ can be determined from the behaviour of $\lambda_{\chi, 0}$ near its global minimum.
For closed manifolds, it is easy to see that the global minimum of $\lambda_{\chi, 0}$ is~$0$, and is attained at a non-degenerate critical point.

In the present paper we study abelian covers of manifolds with boundary, and impose (mixed) Dirichlet and Neumann boundary conditions.
Our main result determines the exact long time asymptotic behaviour of the heat kernel (Theorem~\ref{t:hker}) and is stated in Section~\ref{s:mainthm}.
In this case, the main strategy in~\cites{Lott92,KotaniSunada00} can still be used, however, the minimum of $\lambda_{\chi, 0}$ need not be~$0$.
The main difficulty in the proof in our context is precisely understanding the behaviour of~$\lambda_{\chi, 0}$ near the global minimum.

Under a suitable transformation, the above eigenvalue minimization problem can be reformulated directly as follows.
Let $\omega$ be a harmonic $1$-form on $M$ with Neumann boundary conditions, and consider eigenvalue problem
\begin{equation*}
  -\lap\phi_{\omega}-4\pi i\omega\cdot\nabla\phi_{\omega}+4\pi^{2}|\omega|^{2}\phi_{\omega}=\mu_{\omega}\phi_{\omega}\,,
\end{equation*}
with mixed Dirichlet and Neumann boundary conditions.
It turns out that in order to make the strategy of~\cite{Lott92,KotaniSunada00} work, one needs to show that
\begin{inparaenum}[(i)]
  \item the eigenvalue~$\mu_\omega$ above attains the global minimum if and only if the integral of $\omega$ on closed loops is integer valued, and
  \item in this case the minimum is non-degenerate of second order.
\end{inparaenum}
These are the two key ingredients of the proof, and they are formulated in Lemmas~\ref{l:minLambdaChi} and~\ref{l:muBound} below.
Given these lemmas, the main result of this paper (Theorem~\ref{t:hker}) shows that
\begin{equation*}
  \hat H(t, x, y) \approx \frac{C'_\mathcal I(x, y)}{t^{k/2}} \exp \paren[\Big]{ -\mu_0 t - \frac{d'_\mathcal I(x, y)^2 }{t} }\,,
  \quad\text{as } t \to \infty\,.
\end{equation*}
Here $k$ is the rank of the deck transformation group, and~$C'_\mathcal I$, $d'_\mathcal I$ are explicitly defined functions.

\subsection{The Abelianized Winding of Brownian Motion on Manifolds.}

We now turn our attention to studying the winding of Brownian trajectories on manifolds.
The long time asymptotics of Brownian winding numbers is a classical topic which has been investigated in depth.
The first result in this direction is due to Spitzer~\cite{Spitzer58}, who considered a Brownian motion in the punctured plane.
If $\theta(t)$ denotes the total winding angle up to time~$t$, then Spitzer showed
\begin{equation*}
\frac{2\theta(t)}{\log t}
  \xrightarrow[t\to \infty]{w} \xi \,,
\end{equation*}
where $\xi$ is a standard Cauchy distribution.
The reason that the heavy tailed Cauchy distribution appears in the limit is because when the Brownian motion approaches the origin, it generates a large number of windings in a short period of time.

If one looks at exterior disk instead of the punctured plane, then Rudnick and Hu~\cite{RudnickHu87} (see also Rogers and Williams~\cite{RogersWilliams00}) showed that the limiting distribution is now of hyperbolic type.
In planar domains with multiple holes, understanding the winding of Brownian trajectories is complicated by the fact that it is inherently non-abelian if one wants to keep track of the order of winding around different holes.
Abelianized versions of Brownian winding numbers have been studied in~\cites{PitmanYor86,PitmanYor89,GeigerKersting94,TobyWerner95},
and various generalizations in the context of positive recurrent diffusions, Riemann surfaces, and in higher dimensional domains have been studied in~\cites{
  GeigerKersting94,
  LyonsMcKean84,
  Watanabe00
}.

In this paper, we study the abelianized winding of trajectories of normally reflected Brownian motion on a compact Riemannian manifold with boundary.
The techniques used by many of the references cited above are specific to two dimensions and relies on the conformal invariance of Brownian motion in a crucial way.

Our approach studies Brownian winding on manifolds by lifting trajectories to a covering space, and then using the long time asymptotics of the heat kernel established in Theorem~\ref{t:hker}.
Due to the limitations in Theorem~\ref{t:hker} we measure the winding of Brownian trajectories as a class in $\pi_1(M)_\ab$, the \emph{abelianized} fundamental group of~$M$.
By choosing generators of~$\pi_1(M)$, we measure the abelianized winding of Brownian trajectories as a~$\Z^k$-valued process, denoted by $\rho$.
We show (Theorem~\ref{t:winding}, below) that
\begin{equation*}
  \frac{\rho(t)}{t} \xrightarrow[t \to \infty]{p} 0
  \qquad\text{and}\qquad
  \frac{\rho(t)}{\sqrt{t}} \xrightarrow[t \to \infty]{w} \mathcal N(0, \Sigma)\,,
\end{equation*}
for some explicitly computable matrix~$\Sigma$.
Here $\mathcal N(0, \Sigma)$ denotes a normally distributed random variable with mean $0$ and covariance matrix $\Sigma$.
As a result, one can for instance, determine the long time asymptotics of the abelianized winding of Brownian trajectories around a knot in $\R^3$.

We remark, however, that Theorem~\ref{t:winding} can also be proved directly by using a purely probabilistic argument. For completeness, we sketch this proof in~Section~\ref{s:windingDomain}.

\subsection{The Non-abelian Case.}

One limitation of our techniques is that they do not apply to the non-abelian situation.
Studying the winding of Brownian trajectories without abelianization and the long time behaviour of the heat kernel on non-abelian covers (in particular, on non-amenable covers) are much harder questions.
In the discrete context, Lalley~\cite{Lalley93} (see also~\cite{PittetSaloffCoste01}) showed that the $n$-step transition probability $\P(Z_n=g)$ of a finite range random walk $Z_n$ on the Cayley graph of a free group satisfies
\begin{equation*}
\P(Z_{n}=g)\approx C(g)n^{-\sfrac{3}{2}}R^{-n} \,,
  \quad \text{as } n\rightarrow\infty \,.
\end{equation*}
Here $R$ is the radius of convergence of the Greens function. 
In the continuous context, this suggests that the heat kernel on a non-amenable cover of~$M$ decays exponentially faster than the heat kernel on~$M$, which was shown by Chavel-Karp~\cite{ChavelKarp91}. However, to the best of our knowledge, the exact long time asymptotics is only known in the case of the universal cover of a closed negatively curved manifold by the recent work of Ledrappier-Lim~\cite{LedrappierLim15}, and it remains open beyond the hyperbolic regime.

\paragraph{Plan of this paper.}
In Section~\ref{s:mainthm} we state our main result concerning the long time asymptotics of the heat kernel on abelian covers of $M$ (Theorem~\ref{t:hker}).
In Section~\ref{s:winding} we state our main result concerning the long time behaviour of winding of reflected Brownian motion on~$M$ (Theorem~\ref{t:winding}).
We prove these results in Sections~\ref{s:mainproof} and~\ref{s:pfwinding} respectively.

\section{Long Time Behaviour of the Heat Kernel on Abelian Covers.}\label{s:mainthm}

Let $M$ be a compact Riemannian manifold with smooth boundary, and $\hat{M}$ be a Riemannian cover of $M$ with deck transformation group~$G$ and covering map $\bpi$.
We assume throughout this paper that $G$ is a finitely generated abelian group with rank $k \geq 1$, and $M \cong \hat M / G$.
Let $G_T = \tor(G) \subseteq G$ denote the torsion subgroup of $G$, and let $G_F\defeq G/G_T$. 

Let $\lap$ and $\hat{\lap}$ denote the Laplace-Beltrami operator on $M$ and $\hat{M}$ respectively.
Decompose $\partial M$, the boundary of $M$, into two pieces $\partial_N M$ and $\partial_D M$, and let $H(t,p,q)$ be the heat kernel of $\lap$ on $M$ with Dirichlet boundary conditions on $\partial_D M$ and Neumann boundary conditions on $\partial_N M$.
Let $\partial_D \hat M = \bpi^{-1} (\partial_D M)$ and $\bpi^{-1} (\partial_N M)$, and let $\hat{H}(t,x,y)$ be heat kernel of $\hat{\lap}$ on $\hat{M}$ with Dirichlet boundary conditions on~$\partial_D \hat M$, and Neumann boundary conditions on~$\partial_N \hat M$.

Let $\lambda_{0} \geq 0$ be the principal eigenvalue of $-\lap$ with the above boundary conditions.
Since $M$ is compact, the long time asymptotic behaviour of $H$ can be obtained explicitly using standard spectral theory.
The main result of this paper obtains the asymptotic long time behaviour of the heat kernel $\hat H$ on the non-compact covering space $\hat{M}$.

\begin{theorem}\label{t:hker}
  There exist explicit functions $C_{\mathcal I}, d_{\mathcal I}\colon \hat M \times \hat M \to [0, \infty)$ (defined in~\eqref{e:dIDef} and~\eqref{e:CIdef}, below), such that
  \begin{equation}\label{e:hker}
    \lim_{t\to \infty}
      \paren[\Big]{
	t^{k/2}e^{\lambda_{0}t}\hat{H}(t,x,y)
	-\frac{C_{\mathcal{I}}(x,y)}{|G_T|}
	  \exp\paren[\Big]{-\frac{2\pi^{2}d_{\mathcal{I}}^{2}(x,y)}{t}}}
      =0 \,,
  \end{equation}
  uniformly for $x, y \in \hat M$.
  In particular,
for every $x,y\in\hat{M}$, we have 
\begin{equation*}
\lim_{t\rightarrow\infty}t^{k/2}e^{\lambda_{0}t}\hat{H}(t,x,y)=\frac{C_{\mathcal{I}}(x,y)}{|G_T|} \,.
\end{equation*}
\end{theorem}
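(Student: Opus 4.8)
The plan is to follow the strategy of Lott and Kotani–Sunada, adapted to the boundary case. The starting point is the lifted representation of $\hat H$ in terms of twisted heat kernels, which will be set up in Section~\ref{s:liftedRep}: writing $\mathcal G$ for the dual group (a compact Lie group whose identity component is a $k$-torus and whose component group is $\tor(G)^\vee \cong G_T$), one has
\begin{equation*}
  \hat H(t, x, y) = \int_{\mathcal G} H_\chi(t, x, y) \, d\chi \,,
\end{equation*}
where $H_\chi$ is the heat kernel of the twisted Laplacian $\lap_\chi$ on a line bundle $E_\chi \to M$, and $d\chi$ is normalized Haar measure. Since $M$ is compact, each $H_\chi$ decays exponentially with rate $\lambda_{\chi,0}$, the bottom eigenvalue of $-\lap_\chi$. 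The first step is to reduce the eigenvalue problem to the $1$-form problem stated in the excerpt via the gauge transformation $\phi \mapsto e^{2\pi i \langle\omega,\cdot\rangle}\phi$ along the cover, identifying $\lambda_{\chi,0}$ with $\mu_\omega$ for the harmonic representative $\omega$ of the corresponding class in $H^1(M;\R)$; then Lemmas~\ref{l:minLambdaChi} and~\ref{l:muBound} tell us exactly where on $\mathcal G$ the function $\chi \mapsto \lambda_{\chi,0}$ attains its global minimum and that this minimum equals $\lambda_0$, is attained at finitely many points (one in each connected component, i.e.\ $|G_T|$ of them, each lying over the trivial character on $G_F$), and is nondegenerate of second order there.

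The second step is a Laplace-type asymptotic analysis of the integral. Near each minimizing character $\chi_j$, parametrize $\mathcal G$ by $H^1(M;\R)/H^1(M;\Z) \cong \R^k/\Z^k$ (a neighborhood of the origin) and Taylor-expand: $\lambda_{\chi,0} = \lambda_0 + Q(v) + O(|v|^3)$ where $Q$ is the positive-definite Hessian form supplied by Lemma~\ref{l:muBound}. Correspondingly $H_\chi(t,x,y) = e^{-\lambda_{\chi,0} t}\bigl(\Phi_{\chi,0}(x)\overline{\Phi_{\chi,0}(y)} + o(1)\bigr)$, where $\Phi_{\chi,0}$ is the normalized principal eigensection, which varies smoothly in $\chi$. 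Multiplying by $t^{k/2}e^{\lambda_0 t}$, substituting $v = u/\sqrt t$, and using dominated convergence (the exponential gap away from the minimizers, guaranteed by compactness of $\mathcal G$ and continuity of $\lambda_{\chi,0}$, kills the contribution from the complement of small neighborhoods) yields
\begin{equation*}
  t^{k/2} e^{\lambda_0 t} \hat H(t,x,y) \longrightarrow \frac{1}{|G_T|}\sum_{j} \Phi_{\chi_j,0}(x)\overline{\Phi_{\chi_j,0}(y)} \int_{\R^k} e^{-Q(u)}\,du \,,
\end{equation*}
where the factor $1/|G_T|$ comes from the Haar measure of each component. This computation simultaneously produces the exponent function $d_{\mathcal I}(x,y)^2$: keeping the cross term between the phase of $\Phi_{\chi,0}$ (which records the winding, i.e.\ the image of $x,y$ under the period map) and the quadratic exponent, and completing the square, converts $\exp(-Q(u) + \text{linear in }u)$ into a Gaussian whose value at the optimal shift is $\exp(-2\pi^2 d_{\mathcal I}(x,y)^2/t)$ after undoing the $v = u/\sqrt t$ scaling; this is exactly how the formula~\eqref{e:dIDef} for $d_{\mathcal I}$ and~\eqref{e:CIdef} for $C_{\mathcal I}$ should be read off, and setting $x = y$-type considerations aside, the limit as $t\to\infty$ of the right side of~\eqref{e:hker} is $C_{\mathcal I}(x,y)/|G_T|$, which is the asserted final statement. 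I would also need the uniformity in $(x,y)$: since $\hat M\times\hat M$ is not compact, this requires noting that $C_{\mathcal I}$ and $d_{\mathcal I}$ are invariant under the diagonal $G$-action (equivalently, depend only on the base points and the winding class), so uniformity over $\hat M\times\hat M$ reduces to uniformity over a compact fundamental domain together with uniform control of the error across winding classes — the latter coming from the fact that large winding class forces the Gaussian factor to be small, matching the $O(1/t)$ decay of the error.

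The main obstacle is the precise behaviour of $\lambda_{\chi,0}$ near its minimum — but that is exactly what Lemmas~\ref{l:minLambdaChi} and~\ref{l:muBound} are there to handle, so in the present proof it is assumed. The genuine work that remains is twofold: first, verifying that the principal eigensection $\Phi_{\chi,0}$ and eigenvalue depend smoothly (analytically) on $\chi$ in a neighborhood of each minimizer — this is a standard analytic perturbation theory argument (Kato), using that $\lambda_{\chi,0}$ is a simple isolated eigenvalue for $\chi$ near a minimizer, which follows from nondegeneracy and a Perron–Frobenius/ground-state argument for $\lap_\chi$; second, justifying the interchange of limit and integral uniformly in $(x,y)$, which is where I expect the bookkeeping to be heaviest — one must control the subleading terms in the small-time-of-$H_\chi$ expansion uniformly in $\chi$ over a neighborhood and uniformly in $(x,y)$, and patch this with the exponentially small tail from $\mathcal G$ minus the neighborhoods. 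Extracting the clean closed forms~\eqref{e:dIDef} and~\eqref{e:CIdef} from the Gaussian integral — in particular identifying the relevant inner product on $H^1(M;\R)^\ast$ (the one dual to the Hessian $Q$, which should be a boundary-condition-weighted Hodge inner product on harmonic forms) and the correct normalization of the period map — is the last piece, and getting the constants (the $2\pi^2$, the $|G_T|$, the power $t^{k/2}$) to line up is a routine but delicate calculation.
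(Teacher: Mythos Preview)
Your overall strategy is exactly the paper's: the Lott/Kotani--Sunada representation, localization near the minimum of $\lambda_{\chi,0}$ using Lemmas~\ref{l:minLambdaChi} and~\ref{l:muBound}, passage to the Lie algebra $\mathcal H^1_G$ via the exponential map and a canonical section, and evaluation of the resulting Gaussian integral (the phase $\xi_{x,y}(\omega)$ and the Fourier-transform step producing $d_{\mathcal I}$ and the determinant in $C_{\mathcal I}$ are all as in the paper).

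Your handling of torsion, however, contains an error. You assert that $\chi\mapsto\lambda_{\chi,0}$ attains its minimum at $|G_T|$ points, one per connected component of $\mathcal G$. This contradicts Lemma~\ref{l:minLambdaChi}, which says the minimum is attained \emph{only} at the trivial representation. Concretely, a character trivial on $G_F$ but nontrivial on $G_T$ defines a nontrivial flat line bundle on the finite cover $M_1=\hat M/G_F$; since $M_1$ is connected its ground state is simple and $G_T$-invariant, so the twisted bottom eigenvalue strictly exceeds $\lambda_0$. And note that if you really had $|G_T|$ identical contributions each weighted by Haar mass $1/|G_T|$, they would cancel and you would lose the factor $1/|G_T|$ in~\eqref{e:hker}. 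The correct source of that factor, if one argues directly on $\mathcal G$, is that the \emph{single} minimizer sits in the identity component, which itself carries Haar mass $1/|G_T|$; the other components contribute nothing to the leading asymptotics.

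The paper avoids this altogether by a different route: it first proves the theorem assuming $G$ is torsion-free (where the exponential map $\omega\mapsto\chi_\omega$ is surjective onto $\mathcal G$), and then reduces the general case to that one via the factorization $\hat M \xrightarrow{\ G_F\ } M_1 \xrightarrow{\ G_T\ } M$. Applying the torsion-free result to the cover $\hat M\to M_1$ and tracking how $(\lambda_0,\phi_0,\mathcal I)$ behave under the finite cover $M_1\to M$ (the eigenvalue and the form $\mathcal I$ are unchanged; the $L^2$-normalized eigenfunction picks up a factor $|G_T|^{-1/2}$) is what produces the $1/|G_T|$ in~\eqref{e:hker}.
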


The definition of the functions $C_{\mathcal I}$ and $d_\mathcal I$ above requires
the construction of an inner product on a certain space of harmonic $1$-forms over $M$.
More precisely, let $\mathcal H^1$, defined by
\begin{equation*}
  \mathcal H^1 \defeq \set{
    \omega \in TM^* \st
      d \omega = 0,\ d^* \omega = 0,
      \text{ and }
      \omega \cdot \nu = 0 \text{ on } \partial M
  }\,,
\end{equation*}
be the space of harmonic $1$-forms on $M$ that are tangential on $\partial M$.
Here $\nu$ denotes the outward pointing unit normal on $\partial M$, and depending on the context $x \cdot y$ denotes the dual pairing between co-tangent and tangent vectors, or the inner-product given by the metric.
By the Hodge theorem we know that $\mathcal H^1$ is isomorphic to
the first de Rham co-homology group on~$M$.

Now define $\mathcal H^1_G \subseteq \mathcal H^1$ by%
\begin{equation}\label{e:H_G Def}
  \mathcal H^1_G = \set[\Big]{\omega \in \mathcal H^1 \st \oint_{\hat \gamma} \bpi^*(\omega) = 0 \text{ for all closed loops } \hat \gamma \subseteq \hat M}\,.
\end{equation}
It is easy to see 
that $\mathcal H^1_G$ is naturally isomorphic%
\footnote{%
  The isomorphism between $\mathcal{H}^1_G$ and $\hom(G; \R)$, the dual of the deck transformation group~$G$, can be described as follows.
  Given $g \in G$, pick a base point $p_0 \in M$, and a pre-image $x_0 \in \bpi^{-1}(p_0)$.
  Now define
  \begin{equation*}
    \varphi_\omega(g) = \int_{x_0}^{g(x_0)} \bpi^*(\omega)\,,
  \end{equation*}
  where the integral is done over any path connecting $x_0$ and $g(x_0)$.
  By definition of $\mathcal H^1_G$, the above integral is independent of the chosen path.
  Moreover, since $\bpi^*(\omega)$ is the pull-back of $\omega$ by the covering projection, it follows that $\varphi_\omega(g)$ is independent of the choice of $p_0$ or $x_0$.
  Thus $\omega \mapsto \varphi_\omega$ gives a canonical homomorphism between $\mathcal H^1_G$ and $\hom(G, \R)$.
  The fact that this is an isomorphism follows from the transitivity of  the action of~$G$ on fibers.
}
to $\hom(G, \R)$, and hence $\dim(\mathcal H^1_G) = k$.
Define an inner-product on $\mathcal H^1_G$ as follows.
Let $\phi_{0}$ be the principal eigenfunction of $-\lap$ with boundary conditions $\phi_0 = 0$ on $\partial_D M$ and $\nu \cdot \grad \varphi_0 = 0$ on $\partial_N M$.
Let $\lambda_0$ be the associated principal eigenvalue, and normalize $\phi_0$ so that $\phi_0 > 0$ in $M$ and $\norm{\phi_0}_{L^2} = 1$.
Define the quadratic form $\mathcal{I} \colon \mathcal H^1_G \to \R$ by 
\begin{equation}\label{e:Idef}
  \mathcal{I}(\omega)
    = 8\pi^{2}\int_{M} \abs{\omega}^{2}\phi_{0}^{2}
      + 8\pi\int_{M}\phi_{0}
      \omega\cdot\nabla g_\omega \,,
\end{equation}
where $g_\omega$ is a%
\footnote{
  Note, since $\lambda_0$ manifestly belongs to the spectrum of $-\lap$, the function $g_\omega$ is not unique.
  Moreover, one has to verify a solvability condition to ensure that solutions to equation~\eqref{e:gomega} exist.
  We do this in Lemma~\ref{l:muBound}, which is proved in Section~\ref{s:mubound}, below.%
}
solution to the equation 
\begin{equation}\label{e:gomega}
  -\lap g_\omega - 4\pi \omega \cdot \grad \phi_{0} = \lambda_{0}g_\omega\,,
\end{equation}
with boundary conditions
\begin{equation}\label{e:gomegaBC}
  g_\omega = 0 \quad\text{on } \partial_D M\,,
  \qquad\text{and}\qquad
  \nu \cdot \grad g_\omega = 0 \quad\text{on } \partial_N M\,.
\end{equation}
In the course of the proof of Theorem~\ref{t:hker}, we will see that~$\mathcal I$ arises naturally as the quadratic form induced by the Hessian of the principal eigenvalue of a family of elliptic operators (see Lemma~\ref{l:muBound}, below).

Using~$\mathcal I$, define an inner-product on $\mathcal H^1_G$ by
\begin{equation*}
  \ip{ \omega,\tau}_{\mathcal{I}}\defeq\frac{1}{4}\paren[\big]{\mathcal{I}(\omega+\tau)-\mathcal{I}(\omega-\tau)},\ \ \ \omega,\tau\in\mathcal H^1_G\,.
\end{equation*}
We will show (Lemma~\ref{l:muBound}, below) that the function $\mathcal{I}(\omega)$ is well-defined, and that $\ip{\cdot, \cdot}_{\mathcal I}$ is a positive definite inner-product on $\mathcal H^1_G$.
We remark, however, that under Neumann boundary conditions (i.e.\ if~$\partial_D M = \emptyset$), $\lambda_0 = 0$, $\phi_0$ is constant and $\lambda_0 = 0$.
Hence, under Neumann boundary conditions $\ip{\cdot, \cdot}_\mathcal I$ is simply the (normalized) $L^2$ inner-product (see also Remark~\ref{r:neumann}, below).

Now, to define the distance function $d_{\mathcal{I}}:\hat{M}\times\hat{M}\rightarrow\R$ appearing in Theorem~\ref{t:hker}, we take  $x,y\in\hat{M}$ and define $\xi_{x,y}\in (\mathcal H^1_G)^* \defeq \hom (\mathcal H^1_G; \R)$
by 
\begin{equation}\label{e:xiDef}
\xi_{x,y}(\omega)\defeq\int_{x}^y \bpi^*(\omega) \,,
\end{equation}
where the integral is taken over any any smooth path in~$\hat M$ joining $x$ and $y$.
By definition of $\mathcal H^1_G$, the above integral is independent of the choice of path joining $x$ and $y$.
We will show that the function $d_{\mathcal{I}}:\hat{M}\times\hat{M}\rightarrow\R$ is given by
\begin{equation}\label{e:dIDef}
  d_{\mathcal{I}}(x,y)
    \defeq
    \norm{\xi_{x,y}}_{\mathcal{I}^*} 
    = \sup_{\substack{\omega \in \mathcal H^1_G,\\ \norm{\omega}_\mathcal I = 1}}
    \xi_{x,y}(\omega)\,,
    \quad \text{for } x,y\in\hat{M} \,.
\end{equation}
Here $\norm{\cdot}_{\mathcal I^*}$ denotes the norm on the dual space $(\mathcal H^1_G)^*$ obtained by dualising the inner product~$\ip{\cdot, \cdot}_{\mathcal I}$.

Finally, to define $C_\mathcal I$, we let
\begin{equation}\label{e:h1z}
  \mathcal H^1_\Z \defeq \set[\Big]{ \omega \in \mathcal H^1_G \st \oint_\gamma \omega \in\mathbb{Z},\ \text{for all closed loops } \gamma \subseteq M }\,.
\end{equation}
Clearly $\mathcal H^1_\Z$ is isomorphic to $\Z^k$, and hence we can find $\omega_1, \dots, \omega_k\in\mathcal{H}^1_\mathbb{Z}$ which form a basis of $\mathcal H^1_\Z$.
We will show that $C_\mathcal I$ is given by
\begin{equation}\label{e:CIdef}
  C_{\mathcal{I}}(x,y)
    = (2\pi)^{\sfrac{k}{2}} \abs[\Big]{\det\paren[\Big]{\paren[\big]{\ip{\omega_{i},\omega_{j}}_{\mathcal{I}}}_{1\leq i,j\leq k}}}^{-1/2} \phi_{0}(\bpi(x))\phi_{0}(\bpi(y))\,.
\end{equation}

Notice that the value of $C_\mathcal I(x, y)$ doe not depend on the choice of the basis $(\omega_1, \dots, \omega_k)$.
Indeed, if $(\omega_1', \dots, \omega_k')$ is another such basis of the $\Z$-module $\mathcal H^1_\Z$, since the change-of-basis matrix belongs to $GL(k, \Z)$, it must have determinant~$\pm 1$.

We conclude this section by making a few remarks on simple and illustrative special cases.

\begin{remark}[Neumann boundary conditions]\label{r:neumann}
If Neumann boundary conditions are imposed on all of $\partial M$ (i.e.\ $\partial_D M = \emptyset$), then the definitions of $C_\mathcal I$ and $d_\mathcal I$ simplify considerably.
First, as mentioned earlier, under Neumann boundary conditions we have
\begin{equation*}
  \lambda_0 = 0 \,,
  \qquad\text{and}\qquad
  \phi_{0} \equiv \vol (M)^{-1/2} \,,
\end{equation*}
and hence
\begin{equation}\label{e:INeumann}
  \ip{\omega, \tau}_{\mathcal I}
    = \frac{8\pi^{2}}{\vol (M)}\int_{M} \omega \cdot \tau \,,
\end{equation}
is a multiple of the standard $L^2$ inner-product.
Above $\omega \cdot \tau$ denotes the inner-product on $1$-forms inherited from the metric on~$M$.
In this case
\begin{equation*}
  d_\mathcal I(x, y)
    = \paren[\Big]{ \frac{\vol(M)}{8 \pi^2} }^{1/2}
      \sup_{\substack{\omega \in \mathcal H^1_G\\ \norm{\omega}_{L^2(M)} = 1}}
      \int_x^y \bpi^*(\omega) \,,
\end{equation*}
and
\begin{equation*}
  C_\mathcal I(x, y)
    = \frac{(2\pi)^{\sfrac{k}{2}}}{\vol(M)}
      \abs[\Big]{\det\paren[\Big]{\paren[\big]{\langle\omega_{i},\omega_{j}\rangle_{\mathcal{I}}}_{1\leq i,j\leq k}}}^{-1/2} \,
\end{equation*}
is a constant independent of $x,y\in\hat{M}$.

Note that under Neumann boundary conditions the heat kernel $\hat{H}(t,x,y)$ on the covering space $\hat M$ decays like $t^{-k/2}$ as $t \to \infty$.
In contrast, if Dirichlet boundary conditions are imposed on part of the boundary (i.e.\ $\partial_D M \neq \emptyset$), then we know $\lambda_{0}>0$ and $\phi_0$ is not constant.
In this case, $\ip{\cdot, \cdot}_{\mathcal I}$ is not a constant multiple of the standard $L^2$ inner product, and $\hat{H}(t,x,y)$ decays with rate $t^{-k/2}e^{-\lambda_{0}t}$.
\end{remark}

\begin{remark}[Comparison with the Heat Kernel Decay on~$M$]
  Let~$H$ is the heat kernel of~$\lap$ on $M$.
  Since~$M$ is compact by assumption, the spectral decomposition of~$-\lap$ shows that
  \begin{equation*}
    H(t,p,q)\approx e^{-\lambda_{0}t}\phi_{0}(p)\phi_{0}(q)\,,
    \quad\text{for }
    p,q\in M \,,
    \text{ as } t \to \infty\,.
  \end{equation*}
  Thus, using Theorem~\ref{t:hker} we see
  \begin{equation*}
    \lim_{t\rightarrow\infty}
      \frac{t^{\sfrac{k}{2}}\hat{H}(t,x,y)}{H(t,\pi(x),\pi(y))}
      = \frac{(2\pi)^{\sfrac{k}{2}}}{|G_T|} \;
	\abs[\Big]{\det\paren[\Big]{\paren[\big]{\langle\omega_{i},\omega_{j}\rangle_{\mathcal{I}}}_{1\leq i,j\leq k}}}^{-1/2}
      \,.
  \end{equation*}
  Namely, the heat kernel~$\hat{H}(t,x,y)$ decays faster than $H(t,p,q)$ by exactly the polynomial factor $t^{-k/2}$.
\end{remark}

\begin{remark}[Computation of $\omega_i$ in planar domains]\label{r:planar}
  Suppose for now that~$M$ is a bounded planar domain with $k$ holes excised, and $\rank(G_F) = k$.
  In this case, the basis $\set{\omega_{1},\cdots,\omega_{k}}$ can be constructed directly by solving some boundary value problems.
  Indeed, choose $(p_j, q_j)$ inside the $j^\text{th}$ excised hole and define the harmonic form $\tau_j$ by
  \begin{equation}\label{e:taui}
    \tau_{j}\defeq
      \frac{1}{2\pi}
      \paren[\Big]{\frac{
	  (p - p_{j}) \, dq - (q - q_{j} ) \, dp
	}{
	  (p - p_j)^2 + (q - q_j)^2
      }}\,.
  \end{equation}
  Define $\phi_j \colon M \to \R$  to be the solution of the PDE
  \begin{equation*}
    \left\{
      \begin{alignedat}{2}
	\span
	  -\lap \phi_j = 0 
	  &\qquad& \text{in } M\,,
	\\
	\span
	  \partial_\nu \phi_j = \tau_j \cdot \nu
	  && \text{on } \partial M\,.
      \end{alignedat}
    \right.
  \end{equation*}
  Then $\omega_j$ is given by
  \begin{equation*}
    \omega_j = \tau_j + d\phi_j\,.
  \end{equation*}
\end{remark}

\section{The Abelianized Winding of Brownian Motion on Manifolds.}\label{s:winding}

We now study the asymptotic behaviour of the (abelianized) winding of trajectories of reflected Brownian motion on the manifold $M$.
The winding of these trajectories can be naturally quantified by lifting them to the universal cover.
More precisely, let $\bar M$ be the universal cover of $M$, and recall that the fundamental group $\pi_1(M)$ acts on $\bar M$ as deck transformations.
Fix a fundamental domain $\bar U \subseteq \bar M$, and for each $g\in\pi_1(M)$ define $\bar U_g$ to be the image of $\bar U$ under the action of $g$.
Also, define $\bar{\bm{g}} \colon \bar M \to \pi_1(M)$ by
\begin{equation*}
  \bar{\bm{g}}(x) = g	\quad \text{if } x \in U_g\,.
\end{equation*}

Now given a reflected Brownian motion~$W$ in $M$ with normal reflection at the boundary, let $\bar W$ be the unique lift of $W$ to~$\bar M$ starting in $\bar U$.
Define $\bar \rho(t) = \bar{\bm{g}}( \bar W_t ) \in \pi_1(M)$.
That is, $\bar \rho(t)$ is unique element of $\pi_1(M)$ such that $\bar W(t) \in \bar U_{\bar \rho(t)}$.
Note that $\bar \rho(t)$ measures the winding of the trajectory of~$W$ up to time~$t$.

Our main result of Theorem~\ref{t:hker} will enable us to study the asymptotic behaviour of the projection of $\bar \rho$ to the abelianized fundamental group~$\pi_1(M)_\ab$.
We know that
\begin{equation*}
  G \defeq {}^{\textstyle \pi_1(M)_\ab}  \Big/ \:_{\textstyle \tor(\pi_1(M)_\ab)}
\end{equation*}
is a free abelian group of finite rank, and we let~$k = \rank(G)$.
Let $\pi_G\colon \pi_1(M) \to G$ be the projection of the fundamental group of $M$ onto $G$.
Fix a choice of loops $\gamma_1$, \dots, $\gamma_k \in \pi_1(M)$ so that $\pi_G(\gamma_1)$, \dots, $\pi_G(\gamma_k)$ form a basis of $G$.

\begin{definition}\label{d:winding}
  The $\mathbb{Z}^k$-\textit{valued winding number} of $W$ is defined to be the coordinate process of $\pi_G(\bar \rho(t))$ with respect to the basis $\pi_G(\gamma_1)$, \dots, $\pi_G(\gamma_k)$.
  Explicitly, we say $\rho(t) = (\rho_1(t), \dots, \rho_k(t)) \in \Z^k$ if
  \begin{equation*}
    \pi_G(\bar \rho(t)) = \sum_{i = 1}^k \rho_i(t) \pi_G(\gamma_i)\,.
  \end{equation*}
\end{definition}

Note that the $\Z^k$-valued winding number defined above depends on the choice of basis~$\gamma_1$, \dots, $\gamma_k$.
If $M$ is a planar domain with $k$ holes, we can choose $\gamma_i$ to be a loop that only winds around the $i^\text{th}$ hole once.
In this case, $\rho_i(t)$ is the number of times the trajectory of $W$ winds around the $i^\text{th}$ hole in time $t$.

Our main result concerning the asymptotic long time behaviour of~$\rho$ can be stated as follows.

\begin{theorem}\label{t:winding}
  Let $W$ be a normally reflected Brownian motion in $M$, and $\rho$ be its $\Z^k$ valued winding number (as in Definition~\ref{d:winding}).
  Then, there exists a positive definite, explicitly computable covariance matrix~$\Sigma$ (defined in~\eqref{e:sigmadef}, below) such that
  \begin{equation}\label{e:rhoLim}
    \frac{\rho(t)}{t} \xrightarrow{p} 0
    \qquad\text{and}\qquad
    \frac{\rho(t)}{\sqrt{t}} \xrightarrow{w} \mathcal N(0, \Sigma)\,.
  \end{equation}
  Here $\mathcal N(0, \Sigma)$ denotes a normally distributed random variable with mean $0$ and covariance matrix $\Sigma$.
\end{theorem}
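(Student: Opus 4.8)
The central idea is to reduce the statement about the $\Z^k$-valued winding number~$\rho$ to the heat kernel asymptotics of Theorem~\ref{t:hker}, applied to the abelian cover $\hat M$ of~$M$ whose deck transformation group is~$G$. First I would set up the probabilistic dictionary: if $W$ is the reflected Brownian motion in~$M$ started at a point~$p$, and $\hat W$ its lift to~$\hat M$ started at a chosen preimage~$x_0 \in \bpi^{-1}(p)$, then the law of $\hat W(t)$ has density $\hat H(t, x_0, \cdot)$ with respect to the Riemannian volume on~$\hat M$ (here one uses Neumann conditions on all of $\partial M$, so $\partial_D M = \emptyset$, $\lambda_0 = 0$, and $\phi_0 \equiv \vol(M)^{-1/2}$; this is the case covered by Remark~\ref{r:neumann}). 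The key observation is that the winding class $\pi_G(\bar\rho(t)) \in G$ is exactly the deck transformation~$g$ such that $\hat W(t)$ lands in the fundamental domain $\hat U_g$; equivalently, $\rho(t) \in \Z^k$ records which translate of the fundamental domain contains $\hat W(t)$, in the coordinates given by the basis $\pi_G(\gamma_1), \dots, \pi_G(\gamma_k)$ of~$G$. Thus
\begin{equation*}
  \P\big(\rho(t) = n\big) = \int_{\hat U_{g(n)}} \hat H(t, x_0, y)\, dy\,,
\end{equation*}
where $g(n) = \sum_i n_i\, \pi_G(\gamma_i) \in G$ and $\hat U_{g(n)}$ is the corresponding translated fundamental domain.

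\smallskip

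Next, I would extract the Gaussian limit. Write $g(n) = g_F(n) \cdot \tau$ where $g_F(n) \in G_F \cong \Z^k$ and $\tau \in G_T$; since $\abs{G_T}$ is finite this only contributes a bounded combinatorial factor. The heart of the matter is to understand $\int_{\hat U_{g(n)}} \hat H(t, x_0, y)\, dy$ when $n = n(t)$ scales like $\sqrt t$. By Theorem~\ref{t:hker} (with $\lambda_0 = 0$),
\begin{equation*}
  \hat H(t, x_0, y) \sim \frac{C_\mathcal I(x_0, y)}{\abs{G_T}\, t^{k/2}} \exp\paren[\Big]{ -\frac{2\pi^2 d_\mathcal I(x_0, y)^2}{t} }\,,
\end{equation*}
uniformly in~$y$, and $C_\mathcal I$ is a constant (independent of $y$) under Neumann conditions. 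So the integral over a fundamental domain deep in the cover is, to leading order, $\vol(\hat U)\, C_\mathcal I / (\abs{G_T} t^{k/2})$ times $\exp(-2\pi^2 d_\mathcal I(x_0, y)^2/t)$ evaluated near $y \in \hat U_{g(n)}$. The function $y \mapsto d_\mathcal I(x_0, y)$ is, by~\eqref{e:dIDef}, the dual norm $\norm{\xi_{x_0, y}}_{\mathcal I^*}$, and $\xi_{x_0, y}(\omega) = \int_{x_0}^y \bpi^*\omega$ depends on~$y$ only through its deck-translation class plus a bounded correction as~$y$ ranges over the (compact) fundamental domain. Concretely, identifying $G_F \otimes \R \cong \R^k$ via the basis $\pi_G(\gamma_i)$ and the dual basis pairing with the $\omega_i$, one has $\xi_{x_0, y} \approx \sum_i n_i \, e_i^*$ up to $O(1)$, so $d_\mathcal I(x_0, y)^2 \approx Q(n)$ for a positive definite quadratic form~$Q$ on~$\R^k$ determined by the inner product $\ip{\cdot,\cdot}_\mathcal I$ restricted to $\mathcal H^1_\Z$ — explicitly, $Q$ is the quadratic form dual to the Gram matrix $(\ip{\omega_i, \omega_j}_\mathcal I)_{ij}$. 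Therefore
\begin{equation*}
  \P\big(\rho(t) = n\big) \approx \frac{\vol(\hat U)\, C_\mathcal I}{\abs{G_T} \, t^{k/2}} \exp\paren[\Big]{ -\frac{2\pi^2 Q(n)}{t} }\,,
\end{equation*}
which is precisely a discrete Gaussian with covariance $\Sigma = \frac{1}{4\pi^2}\, (\ip{\omega_i, \omega_j}_\mathcal I)^{-1}$ after matching the normalization $\sum_n \P(\rho(t) = n) = 1$. This identifies~$\Sigma$ in~\eqref{e:sigmadef} and yields $\rho(t)/\sqrt t \xrightarrow{w} \mathcal N(0, \Sigma)$ by a Riemann-sum / dominated-convergence argument summing the local limit over $n$ in $\sqrt t$-sized boxes.

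\smallskip

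Finally, the statement $\rho(t)/t \xrightarrow{p} 0$ follows immediately from the weak convergence of $\rho(t)/\sqrt t$: for any $\varepsilon > 0$, $\P(\abs{\rho(t)}/t > \varepsilon) = \P(\abs{\rho(t)}/\sqrt t > \varepsilon \sqrt t) \to 0$ since $\rho(t)/\sqrt t$ is tight. Alternatively one can prove it directly and more cheaply from $\E[\rho(t)] = o(t)$ and $\E[\abs{\rho(t)}^2] = O(t)$, the latter of which also comes out of the heat-kernel bound. I would present the $L^2$-type estimate as a lemma, since it simultaneously gives tightness, controls the moments needed to upgrade pointwise local-limit convergence to genuine weak convergence, and gives the law of large numbers for free.

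\smallskip

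\textbf{Main obstacle.} The delicate point is making the heuristic "$d_\mathcal I(x_0, y)^2 \approx Q(n)$ uniformly over $y \in \hat U_{g(n)}$, with controlled error" fully rigorous, and in particular interchanging the $t \to \infty$ limit with the infinite sum over $n \in \Z^k$. The uniform convergence in Theorem~\ref{t:hker} handles the leading behavior for each fixed~$n$, but to sum against test functions one needs a genuine uniform Gaussian upper bound on $\hat H(t, x_0, y)$ valid for \emph{all}~$t$ and all~$y$ (not just asymptotically), of the form $\hat H(t, x_0, y) \le C t^{-k/2} \exp(-c\, d_\mathcal I(x_0,y)^2/t)$ — an off-diagonal heat kernel estimate. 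Establishing, or citing, such a bound (e.g.\ via Li–Yau / Davies-type estimates on the cover, or by a separate argument using the twisted-bundle representation $\hat H = \int_{\mathcal G} H_\chi\, d\chi$ and the spectral gap of $\lap_\chi$ away from $\chi = 0$) is the technical crux; once it is in hand, dominated convergence closes the argument routinely.
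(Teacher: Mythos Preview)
Your approach is essentially the paper's: lift to the abelian cover, express $\P(\rho(t)=n)$ as an integral of $\hat H$ over a translated fundamental domain, plug in Theorem~\ref{t:hker} under Neumann conditions, recognize $d_\mathcal I(x_0,y)^2 = (A^{-1}n)\cdot n + O(\abs n)$ (the paper isolates this as Lemma~\ref{l:dIgn}), and pass to a Gaussian via a Riemann sum. The only structural difference is that the paper computes the characteristic function $\E\exp(iz\cdot\rho(t)/\sqrt t)$ directly rather than going through point probabilities; this is cosmetic, since both routes land on the same Riemann sum.

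Two small corrections. First, the generator of $W$ is $\tfrac12\lap$, so the transition density of $\hat W$ is $\hat H(t/2,x_0,\cdot)$, not $\hat H(t,x_0,\cdot)$; this feeds a factor of $2$ through and gives $\Sigma = A/(8\pi^2)$ rather than $A/(4\pi^2)$, matching~\eqref{e:sigmadef}. Second, your ``main obstacle'' is less severe than you suggest: you do not need an independent off-diagonal Gaussian upper bound. Since $\sum_n \P(\rho(t)=n)=1$, you can truncate at $\abs n\le R\sqrt t$, use the uniform asymptotic from Theorem~\ref{t:hker} on the $O(R^k t^{k/2})$ inner terms (total error $O(R^k)\cdot o(1)$), and deduce that the tail mass is at most $1$ minus the inner Gaussian integral, which is small for large $R$. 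This bootstraps tightness directly from Theorem~\ref{t:hker} and closes the limit--sum interchange without further input; the paper itself glosses over this step.
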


\medskip

We now define the covariance matrix~$\Sigma$ appearing in Theorem~\ref{t:winding}.
Given $\omega, \in \mathcal H^1$ define the map $\varphi_\omega \in \hom(\pi_1(M), \R)$ by
\begin{equation*}
  \varphi_\omega(\gamma) = \int_\gamma \omega \,.
\end{equation*}
It is well known that the map $\omega \mapsto \varphi_\omega$ provides an isomorphism between $\mathcal H^1$ and $\hom( \pi_1(M), \R )$.
Hence there exists a unique dual basis $\omega_1$, \dots, $\omega_k \in \mathcal H^1$ such that
\begin{equation}\label{e:omegaiDef}
  \int_{\gamma_i} \omega_j = \delta_{i,j}\,.
\end{equation}
Now, the covariance matrix~$\Sigma$ appearing in Theorem~\ref{t:winding} is given
\begin{equation}\label{e:sigmadef}
  \Sigma_{i,j} \defeq
    \frac{1}{\vol M}\int_{M} \omega_{i} \cdot \omega_{j} \,.
\end{equation}

The proof of Theorem~\ref{t:winding} follows quite easily from our heat kernel result Theorem~\ref{t:hker},  which will be given in Section~\ref{s:pfwinding} below. We remark, however, that Theorem~\ref{t:winding} can also be proved directly by using a probabilistic argument. For completeness, we sketch this proof in Section~\ref{s:windingDomain}.

  A fundamental example of Theorem~\ref{t:winding} is the case when $M$ is a planar domain with multiple holes.
  In this case, in the limiting Gaussian distribution described in the proposition, the forms $\omega_i$ can be obtained quite explicitly following remark ~\ref{r:planar}.
  The winding of Brownian motion in planar domains is a classical topic which has been studied by many authors~\cites{Spitzer58,PitmanYor86,PitmanYor89,RudnickHu87,RogersWilliams00,LyonsMcKean84,GeigerKersting94,TobyWerner95,Watanabe00}.
  The result by Toby and Werner~\cite{TobyWerner95}, in particular, obtains a law of large numbers type result for the time average of the winding number of an obliquely reflected Brownian motion in a bounded planar domain.
  Under normal reflection our result (Theorem~\ref{t:winding}) is a refinement of Toby and Werner's result.
  Namely, we show that the long time average of the winding number is~$0$, and we prove a Gaussian type central limit theorem for fluctuations around the mean.
  A more detailed comparison with the results of~\cite{TobyWerner95} is in Section~\ref{s:tobyWerner}, below.

\begin{remark}[An explicit calculation in the annulus]\label{r:annulus}
  When $M \subseteq \R^2$ is an annulus the covariance matrix $\Sigma$ can be computed explicitly.
  Explicitly, for $0 < r_1 < r_2$ and let
  \begin{equation*}
    A \defeq\set[\big]{ p\in\R^{2} \st r_{1}<|p|<r_{2}} 
  \end{equation*}
  be the annulus with inner radius $r_{1}$ and outer radius $r_{2}$.
  In this case, $k=1$ and define $\rho(t)$ is simply the integer-valued winding number of the reflected Brownian motion in $A$ with respect to the inner hole.
  Now $k = 1$ and the one form $\omega_1$ can be obtained from Remark~\ref{r:neumann}.
  Explicitly, we choose $p_1 = q_1 = 0$, and define $\tau_1$ by~\eqref{e:taui}.
  Now $\tau_1 \cdot \nu = 0$ on $\partial M$, forcing $\phi_1 = 0$ and hence $\omega_1 = \tau_1$.
  Thus Theorem~\ref{t:winding} shows that $\rho(t) / \sqrt{t} \to \mathcal N(0, \Sigma)$ weakly as $t \to \infty$.
  Moreover equation~\eqref{e:sigmadef} and~\eqref{e:INeumann} show that $\Sigma$ is the $1 \times 1$ matrix $(\sigma^2)$ where
  \begin{equation}\label{e:sigmaAnnulus}
    \sigma^{2}
      =\frac{1}{\vol A}\int_{A}\abs{\omega_1}^{2}
      =\frac{1}{2\pi^2(r_{2}^{2}-r_{1}^{2})}\log\paren[\Big]{\frac{r_{2}}{r_{1}}} \,.
  \end{equation}

  We remark, however, that in this case a finer asymptotic result is available.
  Namely, Wen~\cite{Wen17} shows that for large time
  \begin{equation*}
    \var( \rho(t) ) \approx
    \frac{1}{4\pi^2}\paren[\Big]{\ln^2 \paren[\big]{\frac{r_2}{r_1}} - \ln^2 \paren[\big]{\frac{r_1}{r_0}}}
    +
    \frac{\ln \paren{ \sfrac{r_2}{r_1} }}{2\pi^2(r_2^2 - r_1^2)} 
    \paren[\Big]{ t - \frac{r_2^2 - r_0^2}{2} + r_1^2 \ln \paren[\big]{ \frac{r_2}{r_0} }}
  \end{equation*}
  where $r_0 = \abs{W_0}$ is the radial coordinate of the starting point.
  Note Theorem~\ref{t:hker} only shows $\var \rho(t) / t \to \sigma^2$ as $t \to \infty$.
  Wen's result above goes further by providing explicit limit for $\var\rho(t) - \sigma^2 t$ as $t \to \infty$.
\end{remark}

\begin{remark}[Winding in Knot Compliments]
  Another interesting example is the winding of 3D Brownian motion around knots.
  Recall that a knot $K$ is an embedding of $S^{1}$ into $\R^{3}$.
  A basic topological invariant of a knot $K$ is the fundamental group
  $\pi_{1}(\R^{3}\setminus K)$ which is known as the \textit{knot
    group} of $K$. The study of the fundamental group $\pi_{1}(\R^{3}\setminus K)$
  is important for the classification of knots and has significant applications
  in mathematical physics.
  It is well known that the abelianized fundamental group of $\R^{3}\setminus K)$ is always $\Z$.

  Let $K$ be a knot in $\R^{3}$.
  Consider the domain $M=\Omega\setminus N_{K}$,
  where $N$ is a small tubular neighborhood of $K$ and $\Omega$ is
  a large bounded domain (a ball for instance) containing $N_{K}$.
  Let $W(t)$ be a reflected Brownian motion in $M$, and define $\rho(t)$
  to be the $\Z$-valued winding number of $W$ with respect
  to a fixed generator of $\pi_{1}(M)_{\ab}$.
  Now $\rho(t)$ contains information about the entanglement of $W(t)$ with the knot $K$.
  Theorem~\ref{t:winding} applies in this context, and shows that the long time behaviour of $\rho$ is Gaussian with mean~$0$ and covariance given by~\eqref{e:sigmadef}.

    In some cases, the generator of $\pi_1(M)_{\ab}$ (which was used above in defining~$\rho$) can be written down explicitly.
    For instance, consider the $(m,n)$-\textit{torus knot}, $K=K_{m,n}$, defined by $S^{1}\ni z\mapsto(z^{m},z^{n})\in S^{1}\times S^{1}$ where $\gcd(m,n)=1$.
    Then $\pi_{1}(M)$ is isomorphic to the free group with two generators $a$ and $b$, modulo the relation $a^{m}=b^{n}$.
    Here $a$ represents a meridional circle inside the open solid torus and $b$ represents a longitudinal circle winding around the torus in the exterior.
    In this case, a generator of $\pi_1(M)_{\ab}$ is $a^{n'} b^{m'}$, where $m', n'$ are integers such that $m m'+n m'=1$.
    (The existence of such an $m'$ and $n'$ is guaranteed since $\gcd(m,n) = 1$ by assumption.)
    Now $a^{n'} b^{m'}$ represents a unit winding around the knot $K$, and $\rho(t)$ describes the total number of windings around $K$. 
\end{remark}

\section{Proof of Theorem \ref{t:hker}.}\label{s:mainproof}
The main tool used in the proof of Theorem~\ref{t:hker} is an integral representation due to Lott~\cite{Lott92} and Kotani-Sunada~\cite{KotaniSunada00}.
Note that heat kernel $H$ on $M$ can be easily computed in terms of the heat kernel~$\hat H$ on the cover $\hat M$ using the identity
\begin{equation}\label{e:HinTermsOfHatH}
  H(t,p,q)=\sum_{y\in\bpi^{-1}(q)}\hat{H}(t,x,y)\,,
\end{equation}
for any $x\in\bpi^{-1}(p)$.
Seminal work of Lott~\cite{Lott92} and Kotani-Sunada~\cite{KotaniSunada00} address an inverse representation where $\hat{H}(t,x,y)$ is expressed as the integral of a compact family of heat kernels on twisted bundles over $M$.
Since $M$ is compact, the long time behaviour of the these twisted heat kernels is governed by the principal eigenvalue of the associated twisted Laplacian.
Thus, using the integral representation in~\cites{Lott92,KotaniSunada00}, the long time behaviour of $\hat H$ can be deduced by studying the behaviour of the above principal eigenvalues near the maximum.

In the case where only Neumann boundary conditions are imposed on $\partial M$ (i.e.\ if $\partial_D M = \emptyset$), the proof in~\cites{Lott92,KotaniSunada00} can be adapted easily.
If, however, there is a portion of the boundary where a Dirichlet boundary condition is imposed (i.e.\ if $\partial_D M \neq \emptyset$), then one requires finer spectral analysis 
than that is available in~\cites{Lott92,KotaniSunada00}
The key new ingredient lies in understanding the behaviour of the principal eigenvalue of twisted Laplacians.

\paragraph{Plan of this section.}
In Section~\ref{s:liftedRep} we describe the Lott / Kotani-Sunada representation of the lifted heat kernels.
In Section~\ref{ss:hkerProof} we use this representation to prove Theorem~\ref{t:hker}, modulo two key lemmas (Lemmas~\ref{l:minLambdaChi} and~\ref{l:muBound}, below) concerning the principal eigenvalue of the twisted Laplacian.
Finally in Sections~\ref{s:lambdamin} and~\ref{s:mubound} we prove Lemmas~\ref{l:minLambdaChi} and~\ref{l:muBound} respectively.

\subsection{A Representation of the Lifted Heat Kernel.}\label{s:liftedRep}

We begin by describing the Lott~\cite{Lott92} / Kotani-Sunada~\cites{KotaniSunada00} representation of the heat kernel $\hat H$.
Let $S^1 = \set{ z \in \C \st \abs{z} = 1 }$ be the unit circle and let
\begin{equation*}
  \mathcal{G}\defeq\hom (G; S^1)\,,
\end{equation*}
be the space of one dimensional unitary representations of $G$.
We know that $\mathcal G$ is isomorphic to $(S^1)^k$, and hence is a compact Lie group with a unique normalized Haar measure.

Given $\chi\in\mathcal{G}$, define an equivalence relation on $\hat M \times \C$ by 
\begin{equation*}
  (x, \zeta) \sim (g(x), \chi(g) \zeta)
  \quad\text{for all } g \in G\,,
\end{equation*}
and let $E_{\chi}$ be the quotient space $\hat{M}\times\C/ {\sim}$.
Since the action of $G$ on fibers 
is transitive, it follows that $E_\chi$ is a complex line bundle on $M$.

Let $C^{\infty}(E_{\chi})$ be the space of smooth sections of $E_{\chi}$.
Note that elements of $C^{\infty}(E_{\chi})$ can be identified with smooth functions $s \colon \hat M \to \C$ which satisfy the twisting condition
\begin{equation}\label{e:twistingCondition}
  s(g(x))=\chi(g) s(x) \,,
  \quad \forall x\in\hat{M},\ g\in G \,.
\end{equation}
Since $\bpi\colon \hat{M}\to M$ is a local isometry and $G$ acts on $\hat{M}$ by isometries, $E_{\chi}$ carries a natural connection induced by the Riemannian metric on $M$.
Let $\lap_{\chi}$ be the associated Laplacian acting on sections of $E_{\chi}$.
If we impose homogeneous Dirichlet boundary conditions on $\partial_D \hat M$ and homogeneous Neumann boundary conditions on $\partial_N \hat M)$, then the operator $-\lap_{\chi}$ is a self-adjoint positive-definite on $L^{2}(E_{\chi})$.
To write this in terms of sections on $\hat M$, define the space $\mathcal D_\chi$ by
\begin{equation}\label{e:dchi}
  \begin{split}
  \mathcal D_\chi =
    \bigl\{ s \in C^\infty(\hat M, \C)
      \st[\big] & s \text{ satisfies~\eqref{e:twistingCondition}} \,,
	\ s = 0 \text{ on } \partial_D \hat M\,,
      \\
	& \text{and } \nu \cdot \grad s = 0 \text{ on } \partial_N \hat M
    \bigr\}\,.
  \end{split}
\end{equation}
Now $\lap_\chi$ is simply the restriction of the usual Laplacian $\hat \lap$ on $\hat M$, and the $L^2$ inner-product is given by
\begin{equation}\label{e:l2twisted}
  \ip{s_{1},s_{2}}_{L^{2}}
    \defeq
    \int_{M} s_{1}(x_p) \, \overline{s_{2}(x_p)} \, dp \,,
\end{equation}
for $s_{1}, s_{2}\in\mathcal{D}_{\chi}$.
Here for each $p\in M$, $x_{p}$ is a any point in the fiber $\bpi^{-1}(p)$ such that the function $p \mapsto x_p$ is measurable.
The twisting condition \eqref{e:twistingCondition} ensures that \eqref{e:l2twisted} is independent of the choice of $x_p$.

\begin{remark}
  When $\chi \equiv \one$ is the trivial representation, $E_{\chi}$ is the trivial line bundle $M\times\C$, and $\lap_{\chi}$ is the standard Laplacian $\lap$ on $M$.
  When $\chi \not\equiv \one$, $E_{\chi}$ is diffeomorphic to the trivial line bundle, as one can construct a non-vanishing section easily (c.f. \eqref{e:cannonicalSection} below).
  However, $E_\chi$ is \emph{not} isometric to the trivial line bundle, and the use of $E_\chi$ is in the structure of the twisted Laplacian $\lap_{\chi}$, which differs from the standard Laplacian on $M$.
\end{remark}

Let $H_{\chi}(t,x,y)$ be the heat kernel of $-\lap_{\chi}$ on $E_\chi$ (see~\cite{BerlineGetzlerEA92} for the general construction of heat kernels on vector bundles).
As before, we can view $H_\chi$ as a function on $(0,\infty)\times\hat{M}\times\hat{M}$ that satisfies the twisting conditions
\begin{equation*}
  H_{\chi}(t, g(x), y) = \chi(g) \, H_{\chi}(t,x,y)\,,
  \quad\text{and}\quad
  H_{\chi}(t, x, g(y) ) = \overline{\chi(g)} \, H_{\chi}(t,x,y)\,.
\end{equation*}
The Lott~\cite{Lott92} and Kotani-Sunada~\cite{KotaniSunada00} representation expresses~$\hat H$ in terms of $H_\chi$, and allowing us to use properties of $H_\chi$ to deduce properties of~$\hat H$.

\begin{lemma}[Lott, Kotani-Sunada]\label{l:keyRepresentation}
  The heat kernel $\hat H$ on $\hat M$ satisfies the identity
  \begin{equation}\label{e:keyRepresentation}
    \hat{H}(t,x,y)=\int_{\mathcal{G}}H_{\chi}(t,x,y) \, d\chi \,,
  \end{equation}
  where the integral is performed with respect to the normalized Haar measure $d\chi$ on~$\mathcal G$.
\end{lemma}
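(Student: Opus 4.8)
The plan is to produce~\eqref{e:keyRepresentation} by first writing the twisted heat kernel $H_\chi$ explicitly as a ``method of images'' average of $\hat H$ over the deck group, and then integrating that average against the Haar measure on $\mathcal G$ and invoking the orthogonality of characters on a compact abelian group. Concretely, I would set
\begin{equation*}
  K_\chi(t,x,y) \defeq \sum_{g \in G} \chi(g)\, \hat H\bigl(t, x, g(y)\bigr)\,,
\end{equation*}
and show that $K_\chi = H_\chi$. That $K_\chi$ satisfies the two twisting conditions recorded before Lemma~\ref{l:keyRepresentation} is an immediate change-of-variable computation ($g \mapsto gh$ in the second slot, and $g \mapsto h^{-1} g$ together with the $G$-invariance $\hat H(t, h(x), h(z)) = \hat H(t,x,z)$ in the first); and each summand solves the heat equation in $(t,x)$ with the Dirichlet condition on $\partial_D \hat M$ and the Neumann condition on $\partial_N \hat M$, so $K_\chi$ does too once term-by-term differentiation is justified. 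As $t \to 0^+$ the series localizes and only the $g = \mathrm{id}$ term survives in the limit; since $E_\chi$ is locally isometric to $\hat M$, this term furnishes the correct initial condition, and uniqueness of the heat kernel of the self-adjoint operator $-\lap_\chi$ on $L^2(E_\chi)$ gives $K_\chi = H_\chi$.

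Once this is in hand, I would integrate over $\mathcal G$ and interchange the sum and the integral to get
\begin{equation*}
  \int_{\mathcal G} H_\chi(t,x,y)\, d\chi
    = \sum_{g \in G} \hat H\bigl(t, x, g(y)\bigr) \int_{\mathcal G} \chi(g)\, d\chi\,.
\end{equation*}
The evaluation map $g \mapsto (\chi \mapsto \chi(g))$ identifies $G$ with the Pontryagin dual of the compact abelian group $\mathcal G = \hom(G; S^1)$, so $\int_{\mathcal G} \chi(g)\, d\chi$ equals $1$ if $g$ is the identity of $G$ and vanishes otherwise (the characters of $G$ separate points). Hence only the identity term contributes, the right-hand side collapses to $\hat H(t,x,y)$, and this is exactly~\eqref{e:keyRepresentation}.

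The analytic heart of the argument --- and the step I expect to be the main obstacle --- is justifying the convergence of the image series together with its $t$- and $x$-derivatives, so that term-by-term differentiation and the interchange of sum and integral are both legal. For this I would use that $\bpi$ is a local isometry and $G$ acts by isometries with compact quotient $M$, so $\hat M$ has bounded geometry; the $G$-orbit of a point is then discrete with polynomial growth of degree $k = \rank(G/G_T)$ (with $\abs{G_T}$ a bounded multiplicative correction), and a spatially uniform Gaussian-type upper bound for $\hat H$ under the mixed Dirichlet--Neumann conditions absorbs this growth. If one prefers to sidestep such heat-kernel bounds, an alternative is to establish~\eqref{e:keyRepresentation} first weakly, by pairing both sides with smooth compactly supported test functions and using the functional calculus for $-\hat\lap$ and $-\lap_\chi$, and then upgrade to the pointwise statement by parabolic regularity. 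A minor bookkeeping point throughout is to keep the boundary decomposition $\partial M = \partial_D M \cup \partial_N M$ pulled back consistently to $\hat M$, to $E_\chi$, and to the domain $\mathcal D_\chi$.
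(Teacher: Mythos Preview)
Your approach is essentially the same as the paper's: both reduce the identity to the orthogonality relation $\int_{\mathcal G}\chi(g)\,d\chi=\delta_{g,\mathrm{id}}$ together with uniqueness of heat kernels. The only cosmetic difference is the order of operations---the paper defines the right-hand side of~\eqref{e:keyRepresentation} and checks directly that it is the heat kernel on $\hat M$ (using the initial condition $H_\chi(0,x,y)=\sum_g\overline{\chi(g)}\,\delta_{g(x)}(y)$ and orthogonality), whereas you first establish the method-of-images formula $H_\chi(t,x,y)=\sum_g\chi(g)\,\hat H(t,x,g(y))$ (which the paper records separately in the remark following the lemma) and then integrate; your discussion of the convergence issues is more careful than the paper's, which explicitly labels its argument a ``short formal derivation'' and defers the full proof to~\cite{Lott92,KotaniSunada00}.
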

\begin{proof}
  Since a full proof can be found in \cite[Proposition 38]{Lott92},  and \cite[Lemma 3.1]{KotaniSunada00}, we only provide a short formal derivation.
  Suppose $\hat H$ is defined by~\eqref{e:keyRepresentation}.
  Clearly $\hat H$ satisfies the heat equation with Dirichlet boundary conditions on $\partial_D \hat M$ and Neumann boundary conditions on $\partial_N \hat M)$.
  For initial  data observe
  \begin{equation*}
    H_\chi(0, x, y) = \sum_{g \in G} \overline{\chi(g)} \,  \delta_{g(x)} (y)\,,
  \end{equation*}
  where $\delta_{g(x)}$ denotes the Dirac delta function at $g(x)$.
  Integrating over $\mathcal G$ and using the orthogonality property
  \begin{equation*}
    \int_{\mathcal G} \chi(g) \, d\chi =
      \begin{cases}
	1 & g = \mathrm{Id}\\
	0 & g \neq \mathrm{Id}\,,
      \end{cases}
  \end{equation*}
  we see that $\hat H(0, x, y) = \delta_x(y)$, and hence $\hat H$ must be the heat kernel on $\hat M$.
\end{proof}

\begin{remark}
  \GI[2018-02-23]{Maybe tweak more later.}
  The integral representation~\eqref{e:keyRepresentation} is similar to Fourier transform and inversion.
  Indeed, for each $\chi \in \mathcal G$, it is easy to see that
  \begin{equation*}
    H_\chi(t, x, y) = \sum_{g \in G} \chi(g) \hat H(t, x, g(y))\,.
  \end{equation*}
  One can view $\mathcal{G}\ni\chi\mapsto H_\chi$ as a Fourier transform of $\hat{H}$, and equation~\eqref{e:keyRepresentation} gives the Fourier inversion formula.
\end{remark}


\subsection{Proof of the Heat Kernel Asymptotics (Theorem~\ref{t:hker}).}\label{ss:hkerProof}

The representation~\eqref{e:keyRepresentation} allows us to study the long time behaviour of $\hat H$ using the long time behaviour of $H_\chi$.
Since $M$ is compact, the long time behaviour of the heat kernels $H_\chi$ can be studied by spectral theory. More precisely, the twisted Laplacian $\Delta_\chi$ admits a sequence of eigenvalues 
$$0\leqslant \lambda_{\chi,1}\leqslant\lambda_{\chi,2}\leqslant\cdots\leqslant\lambda_{\chi,j}\leqslant\cdots\uparrow\infty,$$and a corresponding sequence of 
eigenfunctions $\set{s_{\chi,j} \st j\geq0} \subseteq\mathcal{D}_{\chi}$ which forms an orthonormal basis of $L^{2}(E_{\chi})$.
According to perturbation theory, $\lambda_{\chi,j}$ is smooth in $\chi$, and up to a normalization $s_{\chi,j}$ can be chosen to depend smoothly on $\chi$.
The heat kernel $H_{\chi}(t,x,y)$ can now be written as
\begin{equation}\label{e:expansionHChi}
  H_{\chi}(t,x,y)=\sum_{j=0}^{\infty}e^{-\lambda_{\chi,j}t}s_{\chi,j}(x)\overline{s_{\chi,j}(y)} \,.
\end{equation}Note that since $M$ is compact, the above heat kernel expansion is uniform in $x,y\in\hat{M}$ provided the boundary is smooth. This can be seen from the fact that the eigenfunction $s_{\chi,j}$ is uniformly bounded by a polynomial power of eigenvalue $\lambda_{\chi,j}$, together with Weyl's law on the growth the eigenvalues.  
Combining~\eqref{e:expansionHChi} with Lemma~\ref{l:keyRepresentation}, we obtain
\begin{equation}\label{e:hkerSpectral}
  \hat{H}(t,x,y)=\sum_{j=0}^{\infty}\int_{\mathcal{G}}e^{-\lambda_{\chi,j}t}s_{\chi,j}(x)\overline{s_{\chi,j}(y)}d\chi \,.
\end{equation}

From \eqref{e:hkerSpectral}, it is natural to expect that the long time behaviour of $\hat H$ is controlled by the initial term of the series expansion. In this respect, there are two key ingredients for proving Theorem~\ref{t:hker}. The first key point, which is the content of Lemma~\ref{l:minLambdaChi}, will allow us to see that the integral $\int_{\mathcal{G}}e^{-\lambda_{\chi,0}t}s_{\chi,0}(x)\overline{s_{\chi,0}(y)}d\chi$ concentrates at the trivial representation $\chi=\bf{1}$ when $t$ is large. Having such concentration property, the second key point, which is the content of lemma~\ref{l:muBound}, will then allow us to determine the long time asymptotics of $\hat{H}$ precisely from the rate at which $\lambda_{\chi,0}\rightarrow\lambda_0$ as $\chi\rightarrow\bf{1}\in\mathcal{G}$. Note that when $\chi = \one$ the corresponding eigenvalue $\lambda_{\one, 0}$ is exactly $\lambda_0$, the principal eigenvalue of $-\lap$ on $M$.

\begin{lemma}[Minimizing the principal eigenvalue]\label{l:minLambdaChi}
  The function $\chi \mapsto \lambda_{\chi, 0}$ attains a unique global minimum on $\mathcal G$ at the trivial representation $\chi = \one$.
\end{lemma}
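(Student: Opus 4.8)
The plan is to show that $\lambda_{\chi,0} \geq \lambda_0 = \lambda_{\one,0}$ for every $\chi \in \mathcal G$, with equality only when $\chi = \one$. First I would exploit the variational characterization
\begin{equation*}
  \lambda_{\chi,0} = \inf_{\substack{s \in \mathcal D_\chi \\ \norm{s}_{L^2} = 1}} \int_M \abs{\grad s(x_p)}^2 \, dp\,,
\end{equation*}
where, as in~\eqref{e:l2twisted}, the integrand is well defined on $M$ because $\abs{\grad s}^2$ is $G$-invariant thanks to~\eqref{e:twistingCondition} and the fact that $G$ acts by isometries. The key observation is that for any $s \in \mathcal D_\chi$ one has the Kato-type pointwise inequality $\abs{\grad \abs{s}} \leq \abs{\grad s}$ on $\hat M$, and $\abs{s}$ descends to a genuine function on $M$ (since $\abs{\chi(g)} = 1$) which vanishes on $\partial_D M$ and satisfies the Neumann condition on $\partial_N M$ in the appropriate weak sense. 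Hence
\begin{equation*}
  \int_M \abs{\grad s(x_p)}^2 \, dp \geq \int_M \abs{\grad \abs{s}}^2 \geq \lambda_0 \norm{\abs s}_{L^2}^2 = \lambda_0\,,
\end{equation*}
by the Rayleigh quotient characterization of $\lambda_0$ on $M$. Taking the infimum over $s$ gives $\lambda_{\chi,0} \geq \lambda_0$ for all $\chi$, and since $\lambda_{\one,0} = \lambda_0$ the minimum is attained at the trivial representation.

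The remaining work is to show \emph{uniqueness}: if $\lambda_{\chi,0} = \lambda_0$ then $\chi = \one$. Suppose $s_{\chi,0}$ is a principal eigensection realizing the minimum. Then both inequalities above must be equalities. Equality in the second forces $\abs{s_{\chi,0}}$ to be a principal eigenfunction of $-\lap$ on $M$, hence (after normalization) $\abs{s_{\chi,0}} = \phi_0 > 0$ in the interior of $M$. Equality in the Kato inequality $\abs{\grad \abs{s_{\chi,0}}} = \abs{\grad s_{\chi,0}}$, combined with $s_{\chi,0}$ being nowhere zero on $\hat M$ (since $\abs{s_{\chi,0}} = \phi_0 \circ \bpi > 0$), forces $s_{\chi,0} = \phi_0 \circ \bpi \cdot e^{i\theta}$ for a \emph{locally constant}, hence constant, phase $\theta$ — i.e.\ $s_{\chi,0}$ is, up to a unit scalar, the pullback of $\phi_0$. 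But then the twisting condition~\eqref{e:twistingCondition} reads $\phi_0(\bpi(g x)) e^{i\theta} = \chi(g)\, \phi_0(\bpi(x)) e^{i\theta}$, i.e.\ $\chi(g) = 1$ for all $g \in G$ since $\phi_0 \circ \bpi$ is $G$-invariant and strictly positive. Thus $\chi = \one$.

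I would expect the main obstacle to be making the phase-rigidity step fully rigorous: justifying that equality in the Kato inequality together with non-vanishing of $s_{\chi,0}$ implies the ratio $s_{\chi,0} / (\phi_0 \circ \bpi)$ is locally constant. Writing $s_{\chi,0} = \phi_0 \circ \bpi \cdot u$ with $\abs u \equiv 1$, one computes $\abs{\grad s_{\chi,0}}^2 = \abs{\grad(\phi_0\circ\bpi)}^2 \abs u^2 + (\phi_0\circ\bpi)^2 \abs{\grad u}^2 + 2(\phi_0\circ\bpi)(\grad(\phi_0\circ\bpi)\cdot \Re(\bar u \grad u))$; since $\abs u = 1$ the cross term vanishes and $\abs{\grad s_{\chi,0}}^2 = \abs{\grad \abs{s_{\chi,0}}}^2 + (\phi_0\circ\bpi)^2 \abs{\grad u}^2$, so equality in Kato gives $\grad u \equiv 0$ where $\phi_0 \circ \bpi \neq 0$, which is all of the interior. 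A secondary technical point is handling the boundary regularity (that $\abs{s_{\chi,0}}$ is an admissible test function for the mixed-boundary Rayleigh quotient on $M$), but this follows from elliptic regularity of eigensections up to the smooth boundary, which has already been invoked in the discussion of~\eqref{e:expansionHChi}. Alternatively, uniqueness can be deduced from strict positivity of the principal eigensection via a Perron--Frobenius / unique-continuation argument combined with connectedness of $\hat M$; I would present whichever is cleaner.
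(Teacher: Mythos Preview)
Your argument is correct and is essentially the diamagnetic (Kato) inequality proof: pass from a section $s$ to its modulus $\abs{s}$, which is an admissible test function for the Rayleigh quotient of $-\lap$ on $M$, and use $\abs{\grad\abs{s}}\le\abs{\grad s}$. The equality analysis via $s_{\chi,0}=(\phi_0\circ\bpi)\,u$ with $\abs{u}\equiv 1$ is clean; note that for the Rayleigh quotient on $M$ the Neumann condition is \emph{natural}, so you only need $\abs{s}\in H^1(M)$ with $\abs{s}=0$ on $\partial_D M$, not a Neumann condition on $\abs{s}$.

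The paper takes a different route. It first transfers the problem to the operator $H_\omega=\lap+4\pi i\,\omega\cdot\nabla-4\pi^2\abs{\omega}^2$ on $M$ via the canonical section $\sigma_\omega$, so that $\lambda_{\chi_\omega,0}=\mu_\omega$ is the bottom eigenvalue of $-H_\omega$ with eigenfunction $\phi_\omega$. Then, instead of invoking Kato, it performs a ground-state substitution: writing $\overline{\phi_\omega}=(\phi_0+\epsilon)f$, multiplying the eigenvalue equation for $\phi_\omega$ by $(\phi_0+\epsilon)f$, and integrating by parts (using also the equation for $\phi_0$), one obtains the exact identity
\[
  \mu_\omega-\mu_0\int_M\frac{\phi_0}{\phi_0+\epsilon}\abs{\phi_\omega}^2
  =\int_M\abs[\Big]{\,2\pi\phi_\omega\,\omega-\frac{i\bigl((\phi_0+\epsilon)\nabla\phi_\omega-\phi_\omega\nabla\phi_0\bigr)}{\phi_0+\epsilon}\,}^2\ge 0,
\]
and sends $\epsilon\downarrow 0$. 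Uniqueness then comes from the vanishing of this square: it forces $\phi_0\nabla s_{\chi,0}=s_{\chi,0}\nabla\phi_0$, hence $s_{\chi,0}=c\,\phi_0$, and the twisting condition gives $\chi=\one$.

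Both arguments are standard for magnetic Schr\"odinger operators and reach the same conclusion. Your Kato approach is shorter and more conceptual; the paper's perfect-square identity is more explicit and dovetails with the Hessian computation in Lemma~\ref{l:muBound}, where a very similar ground-state substitution is used to prove positivity of $\mathcal I$. One minor remark: in your uniqueness step, the factor $u$ is only defined where $\phi_0\circ\bpi>0$, i.e.\ on the interior of $\hat M$; connectedness of the interior then gives $u$ constant there, and continuity extends $s_{\chi,0}=c\,\phi_0\circ\bpi$ to the closure, after which the twisting condition finishes the job.
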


We prove Lemma~\ref{l:minLambdaChi} in Section~\ref{s:lambdamin}, below.
Note that when $\chi=\one$, $\lap_{\chi}$ is simply the standard Laplacian $\lap$ acting on functions on $M$.
If Neumann boundary conditions are imposed on all of $\partial M$ (i.e.\ when $\partial_D M = \emptyset$), $\lambda_{\one, 0} = 0$.
In this case, the proof of Lemma~\ref{l:minLambdaChi} can be adapted from the arguments in~\cite{Sunada89} (see also a direct proof in Section~\ref{s:lambdamin} in the Neumann boundary case). 
If, however, Dirichlet boundary conditions are imposed on a portion of $\partial M$ (i.e.\ $\partial_D M \neq \emptyset$), then $\lambda_{\one, 0} > 0$ and the proof of Lemma~\ref{l:minLambdaChi} requires some work.

In view of ~\eqref{e:hkerSpectral} and Lemma~\ref{l:minLambdaChi}, to determine the long time behaviour of $\hat H$ we also need to understand the rate at which $\lambda_{\chi, 0}$ approaches the global minimum as $\chi \to \one$.
When $G$ is torsion free, we do this by transferring the problem to the linear space $\mathcal H^1_G$.
Explicitly, given $\omega \in \mathcal H^1_G$, we define $\chi_\omega \in \mathcal G$ by
\begin{equation}\label{e:expmap}
  \chi_\omega(g) = \exp\paren[\Big]{ 2 \pi i \int_{x_0}^{g(x_0)} \bpi^*(\omega) }\,,
\end{equation}
for some $x_0 \in \hat M$.
The integral above is done over any smooth path in $\hat M$ joining $x_0$ and $g(x_0)$.
Recall that (Section~\ref{s:mainthm}) for all $\omega \in \mathcal H^1_G$, this integrals is independent of both the path of integration and the choice of $x_0$.
Note that when $G$ is torsion free, the map $\omega \mapsto \chi_\omega$ is a surjective homomorphism between $\mathcal H^1_G$ and $\mathcal G$ whose kernel is precisely $\mathcal H^1_\Z$ defined by \eqref{e:h1z}.
The space $\mathcal{H}^1_G$ can be identified with the Lie algebra of $\mathcal{G}$ and under this identification the map $\omega\mapsto\chi_\omega$ is exactly the exponential map.

Now the rate at which $\lambda_{\chi, 0} \to \lambda_{0}$ as $\chi \to \one \in \mathcal G$ can be obtained from the rate at which $\lambda_{\chi_\omega, 0} \to \lambda_{0}$ as $\omega \to 0 \in \mathcal H^1_G$.
In fact, we claim that the quadratic form induced by the Hessian of the map $\omega \mapsto \lambda_{\chi_\omega, 0}$ at $\omega = 0$ is precisely $\mathcal I(\omega)$ defined by~\eqref{e:Idef},
and this determines the rate at which $\lambda_{\chi_\omega, 0}$ approaches the global minimum~$\lambda_0$.

\begin{lemma}[Positivity of the Hessian]\label{l:muBound}
  For any $\epsilon > 0$, there exists $\delta > 0$ such that if $0 < \abs{\omega} < \delta$ we have
  \begin{equation}\label{e:muBound}
    \abs[\Big]{\lambda_{\chi_\omega, 0} - \lambda_0-\frac{\mathcal{I}(\omega)}{2}}
      < \epsilon\norm{\omega}_{L^2(M)}^2\,,
  \end{equation}
  where $\mathcal I(\omega)$ is defined in~\eqref{e:Idef}.
  Moreover, the map $\omega \mapsto \mathcal I(\omega)$ is a well defined quadratic form, and induces a positive definite inner product on $\mathcal H^1_G$.
\end{lemma}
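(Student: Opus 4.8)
The plan is to carry out a second-order perturbation analysis of the principal eigenvalue $\lambda_{\chi_\omega,0}$ of the twisted Laplacian $\lap_{\chi_\omega}$ along the one-parameter family $t\mapsto \chi_{t\omega}$. The first step is to trivialize the line bundle $E_{\chi_\omega}$ so that the $\chi$-dependence is transferred into the operator. Concretely, fixing $\omega\in\mathcal H^1_G$, one writes a nonvanishing section of $E_{\chi_\omega}$ of the form $x\mapsto \exp(2\pi i\int_{x_0}^x\bpi^*(\omega))$ and uses it to identify sections of $E_{\chi_\omega}$ with ordinary functions on $M$; under this gauge transformation $-\lap_{\chi_\omega}$ becomes the Schr\"odinger-type operator
\begin{equation*}
  \lap_\omega \phi \defeq -\lap\phi - 4\pi i\,\omega\cdot\nabla\phi + 4\pi^2|\omega|^2\phi\,,
\end{equation*}
acting on functions on $M$ with the mixed Dirichlet/Neumann boundary conditions, exactly the operator appearing in the introduction. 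This is a holomorphic (in fact polynomial, degree two) family in the parameter, so analytic perturbation theory of Kato applies: the principal eigenvalue $\mu_\omega \defeq \lambda_{\chi_\omega,0}$ and a corresponding eigenfunction $\phi_\omega$ depend analytically on $\omega$ for $\omega$ small, and $\mu_0=\lambda_0$, $\phi_0$ the principal eigenfunction of $-\lap$.

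The second step is to compute the first and second derivatives of $t\mapsto\mu_{t\omega}$ at $t=0$. Writing $\phi_{t\omega} = \phi_0 + t\psi + O(t^2)$ and expanding the eigenvalue equation $\lap_{t\omega}\phi_{t\omega}=\mu_{t\omega}\phi_{t\omega}$ to first order gives
\begin{equation*}
  -\lap\psi - 4\pi i\,\omega\cdot\nabla\phi_0 = \lambda_0\psi + \mu'\phi_0\,.
\end{equation*}
Pairing with $\phi_0$ and using self-adjointness of $-\lap-\lambda_0$ together with $\int_M|\omega\cdot\nabla\phi_0|\phi_0$ being purely imaginary after integration by parts (so its real part, which is $\mu'$, vanishes) yields $\mu'=0$; one should be slightly careful, writing $\psi = -i g_\omega$ with $g_\omega$ real so that equation~\eqref{e:gomega} and its boundary conditions~\eqref{e:gomegaBC} emerge as the equation for $g_\omega$. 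The solvability condition for~\eqref{e:gomega} is precisely the vanishing of $\int_M (4\pi\,\omega\cdot\nabla\phi_0)\phi_0 = 2\pi\int_M \omega\cdot\nabla(\phi_0^2) = -2\pi\int_M (d^*\omega)\phi_0^2 + 2\pi\int_{\partial M}(\omega\cdot\nu)\phi_0^2 = 0$, using $d^*\omega=0$ and $\omega\cdot\nu=0$ on $\partial M$ (with $\phi_0=0$ on $\partial_D M$); this justifies the footnote after~\eqref{e:gomega}. Expanding to second order and again pairing with $\phi_0$ produces $\mu'' = \mathcal I(\omega)$ with $\mathcal I$ exactly as in~\eqref{e:Idef}: the $8\pi^2\int_M|\omega|^2\phi_0^2$ term is the direct contribution of the potential, and the $8\pi\int_M\phi_0\,\omega\cdot\nabla g_\omega$ term is the cross-term coming from the first-order correction $\psi$. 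Taylor's theorem with the uniform (in the unit sphere of $\mathcal H^1_G$, which is finite-dimensional) control of the analytic remainder then gives the estimate~\eqref{e:muBound}, after noting $\norm{\omega}_{L^2(M)}$ and the Euclidean norm $|\omega|$ on the finite-dimensional space $\mathcal H^1_G$ are equivalent.

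The third step is positive-definiteness of $\mathcal I$ on $\mathcal H^1_G$. Since $\mu_{t\omega}\ge\lambda_0 = \mu_0$ for all $t$ (as $\lambda_0$ is the global minimum of $\chi\mapsto\lambda_{\chi,0}$ by Lemma~\ref{l:minLambdaChi}, and $\chi_{t\omega}\in\mathcal G$), the one-variable function $t\mapsto\mu_{t\omega}$ has a minimum at $t=0$, so $\mathcal I(\omega)=\mu''\ge 0$; this gives nonnegativity immediately. For strict positivity when $\omega\ne 0$, suppose $\mathcal I(\omega)=0$. Then the analytic function $t\mapsto\mu_{t\omega}$ vanishes to at least third order at $0$; combined with $\mu_{t\omega}\ge\lambda_0$ one can push to conclude either $\mu_{t\omega}\equiv\lambda_0$ near $0$, or argue directly that the quadratic form having a null vector forces, via the variational characterization $\mathcal I(\omega) = \inf$ over first-order-correction directions, that $\phi_0$ extends to a genuine eigenfunction of $\lap_{t\omega}$ with eigenvalue $\lambda_0$ for a range of $t$ — which would make $\lambda_0$ a non-isolated point or force $\chi_{t\omega}=\one$ for that range, contradicting injectivity of $\omega\mapsto\chi_\omega$ near $0$ modulo $\mathcal H^1_\Z$ (the only lattice point near $0$ is $0$ itself). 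Alternatively and more cleanly, I expect strict positivity can be read off the explicit formula: one shows $\mathcal I(\omega)$ equals a manifestly nonnegative quantity such as $8\pi^2\inf_{f}\int_M|2\pi\omega - df|^2\phi_0^2$ or similar, vanishing only if $2\pi\omega$ is exact with a suitable weight, which for $\omega\in\mathcal H^1_G$ (harmonic, and with prescribed nonzero periods on at least one loop unless $\omega=0$) is impossible.

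The main obstacle I anticipate is the strict positive-definiteness in the third step: nonnegativity is essentially free from Lemma~\ref{l:minLambdaChi}, but ruling out a nontrivial null vector of $\mathcal I$ requires either a careful completion-of-the-square rewriting of~\eqref{e:Idef} exhibiting $\mathcal I$ as a weighted Dirichlet-type energy, or a genuine use of the nondegeneracy/analyticity of the eigenvalue branch together with the discreteness of the period lattice $\mathcal H^1_\Z$. A secondary technical point is bookkeeping the boundary terms in all the integrations by parts so that the Dirichlet part $\partial_D M$ (where $\phi_0$ vanishes) and the Neumann part $\partial_N M$ (where normal derivatives vanish and $\omega\cdot\nu=0$) combine to kill every boundary contribution — this is routine but must be done to confirm both the solvability condition and the self-adjointness used at each order.
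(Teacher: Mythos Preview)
Your perturbation computation (steps one and two) is essentially identical to the paper's: the paper also gauges away $\chi_\omega$ to obtain the operator $H_\omega$, differentiates the eigenvalue equation along $t\mapsto t\omega$, shows $h'_0=0$ and $\operatorname{Re}\varphi'_0=0$ by a conjugation/symmetry argument, identifies $g_\omega=-i\varphi'_0$, and reads off $h''_0=\mathcal I(\omega)$ by pairing the second-order equation with $\phi_0$. Your verification of the solvability condition for~\eqref{e:gomega} is also the same as the paper's.

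The substantive issue is your third step. Your analyticity route has a genuine gap: from $\mu_{t\omega}\ge\lambda_0$ and $\mathcal I(\omega)=0$ you only get that $t\mapsto\mu_{t\omega}-\lambda_0$ vanishes to order at least three, and nonnegativity then forces the order to be even; but a fourth-order (or higher even-order) zero is perfectly consistent with both analyticity and the global minimum, so you cannot conclude $\mu_{t\omega}\equiv\lambda_0$ near $0$, and the discreteness of $\mathcal H^1_{\mathbb Z}$ never enters. The ``variational characterization'' alternative you mention is too vague to assess.

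Your second suggestion---rewriting $\mathcal I(\omega)$ as a manifestly nonnegative weighted Dirichlet-type energy---is exactly what the paper does, and is the real content of the lemma. The paper's device is the substitution $f_\epsilon\defeq g_\omega/(\phi_0+\epsilon)$ (the $\epsilon$-regularization is needed because $\phi_0$ vanishes on $\partial_D M$). After multiplying~\eqref{e:gomega} by $(\phi_0+\epsilon)f_\epsilon$, integrating by parts, and adding a suitable auxiliary integral, one obtains the identity
\[
  J_\epsilon + \lambda_0\int_M g_\omega^2\Bigl(1-\tfrac{\phi_0}{\phi_0+\epsilon}\Bigr)
  \;=\;\int_M(\phi_0+\epsilon)^2\,\bigl|\nabla f_\epsilon + 2\pi\omega\bigr|^2\;\ge 0,
\]
where $J_\epsilon\to\tfrac12\mathcal I(\omega)$ and the second term on the left tends to $0$ as $\epsilon\downarrow 0$. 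If $\mathcal I(\omega)=0$, Fatou's lemma forces $\nabla(g_\omega/\phi_0)+2\pi\omega=0$ in $M$, so $\omega$ is exact; since $\omega\in\mathcal H^1$ is harmonic with $\omega\cdot\nu=0$ on $\partial M$, this forces $\omega=0$. Your guess of the shape ``$\inf_f\int_M|2\pi\omega-df|^2\phi_0^2$'' is morally right, but the actual execution requires this regularization and the specific choice $f=g_\omega/\phi_0$ rather than a bare infimum.
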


We point out that the positivity of the quadratic form $\mathcal{I}(\omega)$ is crucial. As mentioned earlier (Remark~\ref{r:neumann}), if only Neumann boundary condition is imposed on  $\partial M$, $\mathcal I(\omega)$ is simply a multiple of the standard $L^2$ inner product on $1$-forms over $M$, whose positivity is straight forward.
The positivity of $\mathcal{I}(\omega)$ in the case of Dirichlet boundary conditions requires some extra work. We prove Lemma~\ref{l:muBound} in Section~\ref{s:mubound}.

Assuming Lemma~\ref{l:minLambdaChi} and Lemma ~\ref{l:muBound} for the moment, we can now prove Theorem~\ref{t:hker}.
We first consider the case when $G$ is torsion free, and will later show how this implies the general case.

\begin{proof}[Proof of Theorem~\ref{t:hker} when $G$ is torsion free]
Note first that Lemma~\ref{l:minLambdaChi} allows us to localize the integral in~\eqref{e:hkerSpectral} to an arbitrarily small neighborhood of the trivial representation $\one$.
More precisely, we claim that for any open neighborhood $R$ of $\one\in\mathcal{G}$, there exist constants $C_1>0$, such that 
  \begin{equation}\label{e:chiLocal}
    \sup_{x,y \in \hat M} \abs[\Big]{ e^{\lambda_0 t} \hat H(x, y, t)
      - \int_R \exp\paren[\Big]{-(\lambda_{\chi, 0} - \lambda_0) t }
	  s_{\chi, 0}(x) \overline{s_{\chi, 0}(y)} \, d\chi
    }
    \leq e^{-C_1 t}.
  \end{equation}
  This in particular implies that the long time behavior of $\hat{H}(t,x,y)$ is determined by the long time behavior of the integral representation around an arbitrarily small neighborhood of $\mathbf{1}\in\mathcal{G}$.

To establish~\eqref{e:chiLocal}, recall that Rayleigh's principle and the strong maximum principle guarantee that $\lambda_{\one,0}$ is simple.
Standard perturbation theory (c.f. \cite{ReedSimon78}, Theorem XII.13) guarantees that when $\chi$ is sufficiently close to~$\one$, the eigenvalue $\lambda_{\chi,0}$ is also simple (i.e. $\lambda_{\chi,0}<\lambda_{\chi,1}$).
Now, by Lemma~\ref{l:minLambdaChi}, we observe
\begin{equation*}
  \lambda'\defeq\min\set[\big]{
    \inf\set{ \lambda_{\chi,1} \st \chi\in\mathcal{G}}
    \,, \ 
    \inf\set{\lambda_{\chi,0} \st \chi\in\mathcal{G}\setminus R}
  }
    > \lambda_{0}.
\end{equation*}
Hence by choosing $C_1 \in ( 0, \lambda'-\lambda_{0})$, we have 
  \begin{multline*}
    \sup_{x,y\in\hat{M}}\Bigl( \abs[\Big]{\sum_{j=1}^{\infty}\int_{\mathcal{G}}e^{-(\lambda_{\chi,j}-\lambda_{0})t}s_{\chi,j}(x)\overline{s_{\chi,j}(y)} \, d\chi}
  \\
    +\abs[\Big]{\int_{\mathcal{G}\setminus R}e^{-(\lambda_{\chi,0}-\lambda_{0})t}s_{\chi,0}(x)\overline{s_{\chi,0}(y)}d\chi} \Bigr)  \leq e^{-C_1 t}
  \end{multline*}
 for all $t$ sufficiently large.
  This immediately implies~\eqref{e:chiLocal}.

For any small neighborhood $R$ of $\one$ as before,  our next task is to convert the integral over~$R$ in~\eqref{e:chiLocal} to an integral over a neighborhood of $0$ in $\mathcal H^1_G$ (the Lie algebra of $\mathcal{G}$) using the exponential map~\eqref{e:expmap}.
  To do this, recall $(\omega_1, \dots, \omega_k)$ was chosen to be a basis of $\mathcal H^1_\Z \subseteq \mathcal H^1_G$.
  Identifying $\mathcal H^1_G$ with $\R^k$ using this basis, we let $d\omega$ denote the pullback of the Lebesgue measure on $\R^k$ to $\mathcal H^1_G$.
  (Equivalently, $d\omega$ is the Haar measure on $\mathcal H^1_G$ normalized so that the parallelogram with sides $\omega_1$, \dots, $\omega_k$ has measure~$1$.)
  Clearly
  \begin{multline}\label{e:intR}
    \int_R \exp\paren[\Big]{-(\lambda_{\chi, 0} - \lambda_0) t }
      s_{\chi, 0}(x) \overline{s_{\chi, 0}(y)} \, d\chi
    \\
    = \int_T \exp\paren[\Big]{-(\mu_\omega - \lambda_0) t }
	s_{\chi_\omega, 0}(x) \overline{s_{\chi_\omega, 0}(y)} \, d\omega\,.
  \end{multline}
  Here $\mu_\omega \defeq \lambda_{\chi_\omega, 0}$ and $T$ is the inverse image of $R$ under the map $\omega \mapsto \chi_\omega$.

  Recall the eigenfunctions $s_{\chi_\omega, 0}$ appearing above are sections of the twisted bundle $E_{\chi_\omega}$. They can be converted to functions on~$M$ using some canonical section $\sigma_{\omega}$. 
  Explicitly, let $x_{0}\in\hat{M}$ be a fixed point, and given $\omega\in \mathcal H^1_G$, define  $\sigma_\omega\colon \hat M \to \C$ by
  \begin{equation}\label{e:cannonicalSection}
    \sigma_{\omega}(x)
      \defeq \exp\paren[\Big]{2\pi i\int_{x_{0}}^{x} \bpi^*(\omega) } \,.
  \end{equation}
  Here $\bpi^*(\omega)$ is the pullback of $\omega$ to $\hat{M}$ via the covering projection~$\bpi$, and the integral above is performed along any smooth path in $\hat M$ joining $x_0$ and $x$.
  By definition of $\mathcal H^1_G$, this integral does not depend on the path of integration.

  Observe that for any $g\in G$ we have
  \begin{equation}\label{e:sigmaOmegaDef}
    \sigma_\omega( g(x))
      = \sigma_\omega(x) \exp\paren[\Big]{ 2 \pi i \int_x^{g(x)} \bpi^*(\omega) }
      = \chi_\omega(g) \sigma_\omega(x)\,,
  \end{equation}
  where $\chi_\omega \in \mathcal G$ is defined in equation~\eqref{e:expmap}.
  Thus $\sigma_\omega$ satisfies the twisting condition~\eqref{e:twistingCondition} and hence can be viewed as a section of $E_{\chi_\omega}$.

  Now define
  \begin{equation*}
    \phi_\omega \defeq \overline{\sigma_\omega} \, s_{\chi_\omega, 0}
  \end{equation*}
  and notice that $\phi_\omega(g(x)) = \phi_\omega(x)$ for all $g \in \mathcal G$.
  This implies $\phi_\omega \circ \bpi = \phi_\omega$, and hence $\phi_\omega$ can be viewed as a (smooth) $\C$-valued function on $M$.
  Consequently, we can now rewrite~\eqref{e:intR} as
  \begin{multline}\label{e:intR2}
    \int_R \exp\paren[\Big]{-(\lambda_{\chi, 0} - \lambda_0) t }
      s_{\chi, 0}(x) \overline{s_{\chi, 0}(y)} \, d\chi
    \\
    = \int_T \exp\paren[\Big]{
	  -(\mu_\omega - \mu_0) t - 2\pi i \xi_{x,y}(\omega)
	}
	\phi_\omega(x) \overline{\phi_\omega(y)}
	\, d\omega\,.
  \end{multline}
  where~$\xi_{x,y}(\omega)$ is defined in~\eqref{e:xiDef}.
  (Of course, when $\omega = 0$, $\chi_\omega = \one$ and hence $\mu_0 = \lambda_0$.)
  Thus, using~\eqref{e:chiLocal}, we have
  \begin{equation}\label{e:omegaLocal}
    \sup_{x,y\in\hat{M}}\abs[\Big]{e^{\lambda_{0}t}\hat{H}(x,y,t) - I_1}
      \leq e^{-C_{1}t}\,,
      \quad\text{for $t$ sufficiently large}\,.
  \end{equation}
  Here 
  \begin{equation*}
	I_1 \defeq \int_{T}\exp\paren[\big]{-\paren{\mu_{\omega}-\mu_{0}}t-2\pi i\xi_{x,y}(\omega)}\phi_{\omega}(x)\overline{\phi_{\omega}(y)} \, d\omega\,,
  \end{equation*}
  and $C_1$ is the constant appearing in~\eqref{e:chiLocal}, and depends on the neighborhood $R$.

  By making the neighborhood~$R$ (and hence also $T$) small, we can ensure that $\phi_\omega$ close to $\phi_0$.
  Moreover, when $\omega$ is close to~$0$, Lemma~\ref{l:muBound} implies $\mu_\omega - \mu_0 \approx \mathcal I(\omega)/2$.
  Thus we claim that for any $\eta > 0$, the neighborhood~$R \ni \one$ can be chosen such that
  \begin{equation}\label{e:triangle}
    \limsup_{t\to \infty} \sup_{x, y \in \hat M}
      t^{k/2} (I_1 - I_2) < \eta \,,
  \end{equation}
  where
  \begin{equation*}
    I_2 \defeq \int_{\mathcal{H}_{G}^{1}}\exp\paren[\Big]{-\frac{1}{2}\mathcal{I}(\omega)t-2\pi i\xi_{x,y}(\omega)}\phi_{0}(x)\overline{\phi_{0}(y)} \, d\omega \,.
  \end{equation*}

  To avoid breaking continuity, we momentarily postpone the proof of~\eqref{e:triangle}.
  Now we see that~\eqref{e:omegaLocal} and~\eqref{e:triangle} combined imply
  \begin{equation}\label{e:HlimI2}
    \lim_{t\to \infty} \paren[\Big]{ t^{k/2} e^{\lambda_0 t} \hat H(t, x, y)
	- t^{k/2} I_2 } = 0
  \end{equation}
  Thus to finish the proof we only need to evaluate~$I_2$ and express it in the form in~\eqref{e:hker}.
  
  To do this,  write $\omega = \sum c_n \omega_n \in \mathcal H^1_G$ and observe
    \begin{equation*}
      \mathcal I(\omega) = \sum_{m, n \leq k} a_{m,n} c_m c_n\,,
      \qquad
      \text{where }
      a_{m,n} = \ip{\omega_m, \omega_n}_\mathcal I\,.
    \end{equation*}
    Let $A$ be the matrix $(a_{m,n})$, and $a_{m,n}^{-1}$ be the $(m,n)$ entry of the matrix $A^{-1}$.
    Consequently
    \begin{align*}
     I_2 &= 
	\phi_0(x) \, \overline{\phi_0(y)} \mathbin{\cdot}
      \\
	&\qquad
	\int_{c \in \R^k}
	      \exp\paren[\Big]{ - \sum_{m,n = 1}^k a_{m,n} c_m c_n t - 2\pi i \sum_{m=1}^k c_m \xi_{x,y}(\omega_m) } \, dc_1 \cdots dc_k
      \\
	&= \phi_0(x) \, \overline{\phi_0(y)}
	   \frac{(2\pi)^{k/2}}{t^{k/2}\det( a_{m,n} )^{1/2}}
	      \exp\paren[\Big]{ - \frac{2\pi^2}{t} \sum_{m,n=1}^k a_{m,n}^{-1} \xi_{x,y}(\omega_m) \xi_{x,y}(\omega_n) } 
      \\
	&= \phi_0(x) \, \overline{\phi_0(y)}
	  \frac{(2\pi)^{k/2}}{t^{k/2}\det( a_{m,n} )^{1/2}}
	      \exp\paren[\Big]{ - \frac{2\pi^2}{t} \norm{\xi_{x,y}}^2_{\mathcal I^*} } \,,
    \end{align*}
    where the second equality followed from the formula for the Fourier transform of the Gaussian.
    Note that when $\omega = 0$, $\sigma_\omega \equiv \one$ and hence $\phi_0 = s_{\one, 0}$ is the principal eigenfunction of $-\lap$ on $M$, viewed as a function on $\hat M$.
    Hence $\phi_0$ is real, and so $\overline{\phi_0} = \phi_0$, and we have
  \begin{equation*}
    I_2 =t^{-\sfrac{k}{2}}C_{\mathcal{I}}(x,y)\exp\paren[\Big]{-\frac{2\pi^{2}d_{\mathcal{I}}^{2}(x,y)}{t}} \,,
  \end{equation*}
  where $C_\mathcal{I}$ is defined by \eqref{e:CIdef}. 
  Combined with~\eqref{e:HlimI2} this finishes the proof of Theorem~\ref{t:hker} when $G$ is torsion free.
  \medskip

  It remains to prove~\eqref{e:triangle}.
  Since
  $\omega \mapsto \phi_\omega$ is continuous,
  there exists a neighborhood $T \ni 0$ such that 
  \begin{equation}\label{e:ctyPhi}
    \sup_{x\in\widehat{M}}\abs[\big]{\phi_{\omega}(x)-\phi_{0}(x)} < \eta
    \quad\text{for all } \omega \in T\,.
  \end{equation}
  Now we know that \eqref{e:omegaLocal} holds with some constant $C_1 = C_1(\eta) >0$ when $t$ is large.
Write 
\begin{equation*}
  t^{\sfrac{k}{2}}(I_1-I_2)=J_{1}+J_{2}+J_{3} \,,
\end{equation*}
where 
\begin{gather*}
J_{1}\defeq t^{\sfrac{k}{2}}\int_{T}\paren[\Big]{e^{-\paren{\mu_{\omega}-\mu_{0}}t}-e^{-\mathcal{I}(\omega)t / 2}}\exp\paren[\big]{-2\pi i\xi_{x,y}(\omega)}\phi_{\omega}(x)\overline{\phi_{\omega}(y)} \, d\omega \,,
\\
J_{2} \defeq t^{\frac{k}{2}}\int_{T}\exp\paren[\Big]{-\frac{1}{2}\mathcal{I}(\omega)t-2\pi i\xi_{x,y}(\omega)}\paren[\Big]{\phi_{\omega}(x)\overline{\phi_{\omega}(y)}-\phi_{0}(x)\overline{\phi_{0}(y)}} \, d\omega \,,
\end{gather*}
and
\begin{equation*}
J_{3}\defeq t^{\sfrac{k}{2}}\int_{\mathcal{H}_{G}^{1}\setminus T}\exp\paren[\Big]{-\frac{1}{2}\mathcal{I}(\omega)t-2\pi i\xi_{x,y}(\omega)}\phi_{0}(x)\overline{\phi_{0}(y)} \, d\omega \,.
\end{equation*}

 First, by Lemma~\ref{l:muBound}, $\mathcal{I}(\omega)$ is a positive definite quadratic form, and hence the Gaussian tail estimate shows there exists $C_2 = C_2(\eta) >0$, such that
\begin{equation*}
|J_3|\leq e^{-C_{2}t}
\end{equation*}
uniformly in $x,y\in\hat{M}$, when $t$ is sufficiently large.  

 Next, by \eqref{e:ctyPhi} and the positivity of the quadratic form $\mathcal{I}(\omega)$, we have 
\begin{equation*}
  \abs{J_{2}}
  \leq C_{3}\eta t^{\sfrac{k}{2}}\int_{T}e^{-\mathcal{I}(\omega)t / 2} \, d\omega
 =C_{3}\eta\int_{\sqrt{t}\cdot T}e^{-\mathcal{I}(v) / 2} \, dv
 \leq C_{4}\eta \,,
\end{equation*}
uniformly in $x,y\in\hat{M}$.

Finally, to estimate $J_1$, first choose $K\subseteq\mathcal{H}^1_G$ compact such that
\begin{equation*}
  \int_{\mathcal{H}_{G}^{1}\setminus K}\exp\paren[\Big]{-\frac{1}{4}\mathcal{I}(v)} \, dv<\eta \,.
\end{equation*}
By using the same change of variables $v=\sqrt{t}\omega$, we  write
\begin{equation*}
J_{1}=J_{1}'+J_{1}'',
\end{equation*}
where 
\begin{align*}
J_{1}' &\defeq\int_{K}\paren[\Big]{\exp\paren[\Big]{-\paren[\Big]{\mu_{v/t^{1/2}}-\mu_{0}}t}-\exp\paren[\Big]{-\frac{1}{2}\mathcal{I}(v)}}
  \\
  &\qquad\qquad\cdot\exp\paren[\Big]{-\frac{2\pi i}{\sqrt{t}}\xi_{x,y}(v)}\phi_{\sfrac{v}{t^{1/2}}}(x)\overline{\phi_{\sfrac{v}{\sqrt*{t}}}(y)} \, dv
\end{align*}
and 
\begin{multline*}
J_{1}'' \defeq\int_{\sqrt{t}\cdot T\setminus K}\paren[\Big]{\exp\paren[\Big]{-\paren[\Big]{\mu_{v/t^{1/2}}-\mu_{0}}t}-\exp\paren[\Big]{-\frac{1}{2}\mathcal{I}(v)}}\\
 \cdot\exp\paren[\Big]{-\frac{2\pi i}{\sqrt{t}}\xi_{x,y}(v)}\phi_{\sfrac{v}{\sqrt*{t}}}(x)\overline{\phi_{\sfrac{v}{\sqrt*{t}}}(y)} \, dv
\end{multline*}
respectively.
By Lemma~\ref{l:muBound}, we know that 
\begin{equation*}
\lim_{t\rightarrow\infty}\paren[\big]{\mu_{v/t^{1/2}}-\mu_{0}}t=\frac{1}{2}\mathcal{I}(v) \,,
\end{equation*}
for every $v\in\mathcal{H}^1_G$.
Therefore, by the dominated convergence theorem, we have 
\begin{equation*}
\lim_{t\rightarrow\infty}\sup_{x,y\in\hat{M}}\abs{J_{1}'}=0 \,.
\end{equation*}
To estimate $J_1''$, choose $\epsilon>0$  such that 
\begin{equation*}
\frac{1}{4}\mathcal{I}(\omega)\geq\epsilon\norm{\omega}_{L^{2}(M)}^{2} \,,
\qquad\text{for all }
\omega\in\mathcal{H}_{G}^{1} \,.
\end{equation*}
For this $\epsilon$, Lemma~\ref{l:muBound} allows us to further assume that $T$ is small enough so that 
\begin{equation*}
\omega\in T\implies\mu_{\omega}-\mu_{0}\geq\frac{1}{2}\mathcal{I}(\omega)-\epsilon\norm{\omega}_{L^{2}(M)}^{2}\geq\frac{1}{4}\mathcal{I}(\omega).
\end{equation*}
In particular, we have 
\begin{equation*}
v\in\sqrt{t}\cdot T\implies\paren[\big]{\mu_{\sfrac{v}{t^{1/2}}}-\mu_{0}}t\geq\frac{1}{4}\mathcal{I}(v).
\end{equation*}
It follows that
\begin{align*}
J_{1}'' & \leq C_{5}\int_{\sqrt{t}\cdot T\setminus K}\paren[\Big]{\exp\paren[\big]{-\paren[\big]{\mu_{v/t^{1/2}}-\mu_{0}}t}+\exp\paren[\Big]{-\frac{1}{2}\mathcal{I}(v)}} \, dv\\
 & \leq2C_{5}\int_{\sqrt{t}\cdot T\setminus K}\exp\paren[\Big]{-\frac{1}{4}\mathcal{I}(v)} \, dv\\
 & \leq2C_{5}\int_{\mathcal{H}_{G}^{1}\setminus K}\exp\paren[\Big]{-\frac{1}{4}\mathcal{I}(v)}dv\\
 & \leq2C_{5}\eta \,,
\end{align*}
uniformly in $x,y\in\hat{M}$. 

Combining the previous estimates, we conclude
\begin{equation*}
  \overline{\lim_{t\rightarrow\infty}}\sup_{x,y\in\hat{M}}\paren[\Big]{t^{k/2}(I_1-I_2)}\leq(C_{4}+2C_{5})\eta \,,
\end{equation*}
and $\eta$ with $\eta / (C_4 + 2 C_5)$ yields~\eqref{e:triangle} as claimed.
\end{proof}

When $G$ is has a torsion subgroup, we prove Theorem~\ref{t:hker} factoring through an intermediate finite cover.

\begin{proof}[Proof of Theorem~\ref{t:hker} when $G$ has a torsion subgroup]
  Since $G$ can be (non-ca\-no\-ni\-cal\-ly) expressed as a direct sum $G_T\oplus G_F$, we define $M_1 = \hat M / G_F$.
  This leads to the covering factorization
  \begin{equation}\label{e:torsionFactorization}
    \begin{tikzcd}
      \hat{M}
	  \arrow[r,"\bpi_F"]
	  \arrow[dr,"\bpi"']
	& M_1 \defeq \hat{M}/G_F
	  \arrow[d,"\bpi_T"]
      \\
      & M\,,
    \end{tikzcd}
  \end{equation}
   where $\bpi_{T}$ and $\bpi_{F}$ have deck transformation groups $G_T$ and  $G_F$ respectively, and $M_{1}$ is compact.

  Recall that $\lambda_{0}$ is the principal eigenvalue of $-\lap$ on $M$, and $\phi_{0}$ is the corresponding $L^2$ normalized eigenfunction.
  Let $\Lambda_{0}$ be the principal eigenvalue of $-\lap_1$ on $M_1$, and $\Phi_{0})$ be the corresponding $L^2$ normalized eigenfunction.
  (Here $\lap_{1}$ is the Laplacian on $M_{1}$.)

  Notice that $\bpi_T^* \phi_0$, the pull back of $\phi_0$ to $M_1$, is an eigenfunction of $-\lap_1$ and $\norm{\bpi_T^* \phi_0}_{L^2(M)} = \abs{G_T}^{1/2}$.
  Thus
  \begin{equation}\label{e:Lambda1EqLambda}
    \Lambda_0 = \lambda_0
    \qquad\text{and}\qquad
    \Phi_{0}=\frac{\bpi_{T}^{*}\phi_{0}}{|G_T|^{1/2}}\,.
  \end{equation}

  Let $\mathcal I_1(\omega_1)$ be the analogue of $\mathcal I$ (defined in equation~\eqref{e:Idef}) for the manifold $M_1$.
  Explicitly,
  \begin{equation*}
     \mathcal{I}_1(\omega_1)
       =8\pi^{2}\int_{M_{1}}|\omega_1|^{2}\Phi_{0}^{2}+8\pi\int_{M_{1}}\Phi_{0} \, \omega_1\cdot\nabla g_{1}\,,
  \end{equation*}
  where $g_1$ is a solution of
  \begin{equation*}
    -\lap g_1 - 4 \pi \omega_1 \cdot \grad \Phi_0 = \Lambda_0 g_1\,,
  \end{equation*}
  with Dirichlet boundary conditions on $\bpi_T^{-1}(\partial_D M)$ and Neumann boundary conditions on $\bpi_T^{-1}(\partial_N M)$.
  Note that given $\omega_1 \in \mathcal H^1_G(M_1)$ we can find $\omega \in \mathcal H^1_G(M)$ such that $\bpi_T^*(\omega) = \omega_1$.
 Indeed, since $\dim(\mathcal H^1_G(M) ) = \dim( \mathcal H^1_G(M_1)) = k$ and $\bpi_T^* \colon \mathcal H^1_G(M) \to \mathcal H^1_G(M_1)$ is injective linear map, it must be an isomorphism.

  Now using~\eqref{e:Lambda1EqLambda} we observe that up to an addition of a scalar multiple of $\Phi_0$, we have
  \begin{equation*}
    g_{1} = \frac{\bpi_{T}^{*}g}{|G_T|^{1/2}} \,,
  \end{equation*}
  where $g = g_\omega$ is defined in~\eqref{e:gomega}.
  Thus, using~\eqref{e:Lambda1EqLambda} again we see
  \begin{align}
   \mathcal{I}_1(\omega_1)
     & =8\pi^{2}|G_T|\int_{M}|\omega|^{2}\frac{\phi_{0}^{2}}{|G_T|}+8\pi|G_T|\int_{M}\frac{\phi_{0}}{|G_T|^{1/2}}\omega\cdot\nabla\paren[\Big]{\frac{g}{|G_T|^{1/2}}}\nonumber \\
   & =8\pi^{2}\int_{M}|\omega|^{2}\phi_{0}^{2}+8\pi\int_{M}\phi_{0} \, \omega\cdot\nabla g
    = \mathcal I(\omega)\,.\label{e:IonM}
  \end{align}

  Since the deck transformation group of $\hat M$ as a cover of $M_1$ is torsion free, we may apply Theorem~\ref{t:hker} to $M_1$.
  Thus, we have
  \begin{equation}\label{e:hkerTorsion}
    \lim_{t\to \infty}
      \paren[\Big]{
	t^{k/2}e^{\Lambda_{0}t}\hat{H}(t,x,y)
	- C_{\mathcal{I}_1}(x,y)
	  \exp\paren[\Big]{-\frac{2\pi^{2}d_{\mathcal{I}_1}^{2}(x,y)}{t}}}
  \end{equation}
  uniformly on $\hat M$.
  Using~\eqref{e:IonM} we see $d_{\mathcal I_1} = d_\mathcal I$.
  Using~\eqref{e:Lambda1EqLambda} and \eqref{e:IonM} we see
  \begin{equation*}
    C_{\mathcal I_1}(x, y) = \frac{1}{\abs{G_T}} C_{\mathcal I}(x, y)\,,
  \end{equation*}
  and inserting this into~\eqref{e:hkerTorsion} finishes the proof.
\end{proof}

The rest of this section is devoted to proving Lemma~\ref{l:minLambdaChi} and Lemma~\ref{l:muBound}.

\subsection{Minimizing the Principal Eigenvalue (Proof of Lemma~\ref{l:minLambdaChi}).}\label{s:lambdamin}

Our aim in this subsection is to prove Lemma~\ref{l:minLambdaChi}, which asserts that the function $\chi \mapsto \lambda_{\chi,0}$ attains a unique global minimum at $\chi = \one$.

If only Neumann boundary condition is imposed on  $\partial M$, Lemma~\ref{l:minLambdaChi} can be proved by adapting the argument in~\cite{Sunada89}.
This yields quantitative upper and lower bounds on the function $\chi\mapsto\lambda_{\chi,0}$ in addition to the global minimum.
Since we only need the global minimum of~$\lambda_{\chi, 0}$, there is a simple proof under Neumann boundary conditions.
We present this first.
We will subsequently provide an independent proof of Lemma~\ref{l:minLambdaChi} under mixed Dirichlet and Neumann boundary conditions.

\begin{proof}[Proof of Lemma~\ref{l:minLambdaChi} under Neumann boundary conditions]
  In this case we know that $\lambda_0 = \lambda_{\one,0}=0$, and the corresponding eigenfunction $s_{\one, 0}$ is constant.
  Thus to prove the lemma it suffices to show that $\lambda_{\chi,0}>0$ for all $\chi\neq\one$.

To see this given $\chi\in\mathcal{G}$ let $s = s_{\chi, 0} \in\mathcal{D}_{\chi}$ be the principal eigenfunction of $-\lap_{\chi}$, and $\lambda = \lambda_{\chi, 0}$ be the principal eigenvalue.
We claim that for any fundamental domain $U \subseteq \hat M$, the eigenvalue~$\lambda$ satisfies
\begin{equation}\label{e:raleigh}
  \lambda \int_{U} \abs{s}^2 \, dx = \int_U \abs{\grad s}^2 \, dx\,.
\end{equation}
Once~\eqref{e:raleigh} is established, one can quickly see that $\lambda > 0$ when $\chi \neq \one$.
Indeed, if $\chi \neq \one$, $s(g(x)) = \chi(g) s(x)$ forces the function $s$ to be non-constant, and now equation~\eqref{e:raleigh} forces $\lambda > 0$.

To prove~\eqref{e:raleigh} observe
\begin{equation}\label{e:stokesThmUg}
  \lambda \int_U \abs{s}^2
    = -\int_{U} \bar{s} \lap_{\chi} s 
    = \int_{U} \abs{\nabla s}^{2} 
      -\int_{\partial U}\bar{s}  \, \partial_\nu s 
      \,.
\end{equation}
Here, $\partial_\nu s  = \nu \cdot  \grad s$ is the outward pointing normal derivative on $\partial U$.
We will show that the twisting condition~\eqref{e:twistingCondition} ensures that the boundary integral above vanishes.

Decompose $\partial U$ as
\begin{equation*}
\partial U = \Gamma_{1}\cup\Gamma_{2} \,,
\quad\text{where }
\Gamma_{1} \defeq \partial U \cap \partial \hat M,
\quad\text{and }
\Gamma_{2} \defeq \partial U - \Gamma_1\,.
\end{equation*}
Note $\Gamma_1$ is the portion of $\partial U$ contained in $\partial\hat{M}$, and  $\Gamma_2$ is the portion of $\partial U$ that is common to neighboring fundamental domains.
Clearly, the Neumann boundary condition~\eqref{e:sOmegaNeumann} implies
\begin{equation*}
\int_{\Gamma_{1}}\bar{s} \, \partial_\nu s
  =0 \,.
\end{equation*}

For the integral over $\Gamma_2$, let $(e_1, \dots, e_k)$ be a basis of $G$ and note that $\Gamma_2$ can be expressed as the disjoint union
\begin{equation*}
\Gamma_{2}=\bigcup_{j=1}^{k}\paren[\big]{\Gamma_{2,j}^{+}\cup\Gamma_{2,j}^{-}}\,,
\end{equation*}
where the $\Gamma_{2,j}^{\pm}$ are chosen so that $\Gamma_{2,j}^+ =  e_{j}( \Gamma_{2,j}^{-})$.
Using the twisting condition~\eqref{e:twistingCondition} and the fact that the action of $e_j$ reverses the direction of the unit normal on $\Gamma_{2,j}^-$, we see
\begin{align*}
  \int_{\Gamma_{2,j}^{+}}\overline{s(x)} \, \partial_\nu s (x) \,dx
    & =-\int_{\Gamma_{2,j}^{-}}
      \overline{s\paren[\big]{e_{j}(y)}} \, \partial_\nu s\paren[\big]{e_{j}(y)} \, dy
    \\
    & =-\int_{\Gamma_{2,j}^{-}} \overline{\chi(e_{j})} \chi(e_{j})
      \, \overline{s(y)} \, \paren[\big]{\partial_\nu s (y)} \, dy
    \\
    &
    =-\int_{\Gamma_{2,j}^{-}}\overline{s(y)} \, \partial_\nu s(y) \, dy \,,
\end{align*}
Consequently,
\begin{equation*}
\int_{\Gamma_{2}}\overline{s} \, \partial_\nu s
  = \sum_{j=1}^{k}\paren[\Big]{ \int_{\Gamma_{2,j}^{+}}+\int_{\Gamma_{2,j}^{-}}}\overline{s} \, {\partial_\nu s}
  =0 \,.
\end{equation*}
and hence the boundary integral in~\eqref{e:stokesThmUg} vanishes.
Thus~\eqref{e:raleigh} holds, and the proof is complete.
\end{proof}

In the general case when $\partial_D M \neq \emptyset$, $\lambda_{\chi,0}>0$ for every $\chi\in\mathcal{G}$, and all eigenfunctions are non-constant.
This causes the previous argument to break down and the proof involves a different idea.
Before beginning the proof, we first make use of a canonical section to transfer the problem to the linear space $\mathcal{H}^1_G$.

Let $\Omega$ be the space of $\C$-valued smooth functions $f\colon M \to \C$ such that $f = 0$ on $\partial_D M$ and $\ip{\grad f, \nu} = 0$ on $\partial_N M$.
Let $\hat f = f \circ \bpi \colon \hat M \to \C$.
Now given $\omega \in \mathcal H^1_G$, let $\sigma_\omega$ (defined in~\eqref{e:sigmaOmegaDef}) be the canonical section and $\chi_\omega \in \mathcal G$ be the exponential as defined in~\eqref{e:expmap}.
Notice that the function $\sigma_\omega \hat f \in \mathcal D_{\chi_\omega}$ is a section on $E_{\chi_\omega}$.
Clearly $\sigma_\omega \hat f = 0$ on $\partial_D \hat M$.
Moreover, since $\omega \cdot \nu = 0$ on $\partial M$ we have
\begin{equation}\label{e:sOmegaNeumann}
  \nu \cdot \grad \sigma_\omega = 0 \qquad\text{on } \partial \hat M\,.
\end{equation}
and hence $\nu \cdot \grad (\sigma_\omega \hat f) = 0$ on $\partial_N \hat M$.
Thus $\sigma_\omega \hat f \in \mathcal D_{\chi_\omega}$, where $\mathcal D_{\chi_\omega}$ is defined in equation~\eqref{e:dchi}, and the map $f\mapsto\hat{f}\sigma_{\omega}$ defines a unitary isomorphism between $\Omega\subseteq L^{2}(M)$ and $\mathcal{D}_{\chi_{\omega}}\subseteq L^{2}(E_{\chi_{\omega}})$ respecting the imposed boundary conditions.

Now, since $\omega$ and $\hat \omega \defeq \omega \circ \bpi$ are both harmonic, we compute 
\begin{equation*}
  \lap_{\chi_{\omega}}(\hat{f}\sigma_{\omega})
    = \paren{ \paren{H_{\omega}f} \circ \bpi } \, \sigma_{\omega} \,,
\end{equation*}
where
$H_{\omega}$ is the self-adjoint operator on $\Omega\subseteq L^{2}(M)$ defined by 
\begin{equation}\label{e:hOmegaDef}
  H_{\omega}f\defeq\lap f+4\pi i \, \omega\cdot\nabla f-4\pi^{2}|\omega|^{2}f \,.
\end{equation}
Here we used the Riemannian metric to identify the $1$-form $\omega$ with a vector field.

The above shows that $\lap_{\chi_{\omega}}$ is unitarily equivalent to $H_{\omega}$.
In particular, eigenvalues of $-H_{\omega}$, denoted by $\mu_{\omega, j}$ are exactly $\lambda_{\chi_\omega, j}$, the eigenvalues of $-\lap_{\chi_\omega}$.
Moreover, the corresponding eigenfunctions, denoted by $\phi_{\omega, j}$, are given by
\begin{equation}\label{e:phiOmegaJ}
\phi_{\omega,j}=\frac{s_{\chi_{\omega},j}}{\sigma_{\omega}} \,,
\quad j\geq0 \,.
\end{equation}
Note that $\phi_{\omega,j}$ is a well-defined function on $M$ that satisfies Dirichlet boundary conditions on $\partial_D M$ and Neumann boundary conditions on $\partial_N M$.

We will now prove the general case of Lemma~\ref{l:minLambdaChi} by minimizing eigenvalues of the operator $-H_\omega$.

\begin{proof}[Proof of Lemma~\ref{l:minLambdaChi}]
  Let $\omega\in \mathcal H^1_G$ and let $\chi_{\omega} = \exp(\omega)\in\mathcal{G}$ be the corresponding representation defined by~\eqref{e:expmap}.
  Let $\mu_{\omega} = \mu_{\omega, 0} = \lambda_{\chi_{\omega},0}$ and $\phi_{\omega} =  \phi_{\omega, 0}$ where $\phi_{\omega, 0}$ is the principal eigenfunction of $-H_\omega$ as defined in~\eqref{e:phiOmegaJ} above.
  Using~\eqref{e:hOmegaDef} we see
  \begin{gather}
    \label{e:phiOmegaDef}
      -\lap\phi_{\omega}-4\pi i\omega\cdot\nabla\phi_{\omega}+4\pi^{2}|\omega|^{2}\phi_{\omega}=\mu_{\omega}\phi_{\omega}\,,
    \\
    \label{e:phiOmega0Def}
    -\lap\phi_{0}=\mu_{0}\phi_{0}\,,
  \end{gather}
  with Dirichlet boundary conditions on $\partial_D \hat M$ and Neumann boundary conditions on $\partial_N \hat M$.
  Here $\mu_0$ and $\phi_0$ denote the principal eigenvalue and eigenfunction respectively when $\omega \equiv 0$.
  Note that when $\omega \in \mathcal H^1_\Z$, the corresponding representation $\chi_\omega$ is the trivial representation $\one$.
  We will show that $\mu_\omega$ above achieves a global minimum precisely when $\omega \in \mathcal H^1_\Z$ and $\chi_\omega = \one$.

  Now let $\epsilon > 0$ and write
  \begin{equation*}
  \overline{\phi_{\omega}} = (\phi_{0}+\epsilon) f
  \quad\text{where }
  f\defeq \frac{\overline{\phi_{\omega}}}{\phi_{0}+\epsilon}\,.
  \end{equation*}
  Multiplying both sides of~\eqref{e:phiOmegaDef} by $\overline{\phi_{\omega}} = (\phi_0 + \epsilon) f$ and integrating over $M$ gives
  \begin{align*}
    -\int_{M}(\lap\phi_{\omega})(\phi_{0}+\epsilon)f & =\int_{M}\nabla\phi_{\omega}\cdot\paren[\big]{(\phi_{0}+\epsilon)\nabla f+f\nabla\phi_{0}} +
    \int_{\partial M} B_1\\
    & =\int_{M}(\phi_{0}+\epsilon)\nabla\phi_{\omega}\cdot\nabla f\\
    & \qquad-\int_{M}\phi_{\omega}\paren[\big]{\nabla f\cdot\nabla\phi_{0}+f\lap\phi_{0}} + \int_{\partial M} B_2\\
    & =\int_{M}\paren[\big]{(\phi_{0}+\epsilon)\nabla\phi_{\omega}-\phi_{\omega}\nabla\phi_{0}}\cdot\nabla f\\
    & \qquad \mathbin{+} \mu_{0}\int_{M}f\phi_{0}\phi_{\omega}+ \int_{\partial M} B_2\,,
  \end{align*}
  where $B_i \colon \partial M \to \C$ are boundary functions that will be combined and written explicitly below (equation~\eqref{e:boundaryTerms}).
  (We clarify that even though the functions above are $\C$-valued, the notation $\grad \phi_\omega \cdot \grad f$ denotes $\sum_i \partial_i \phi_\omega \partial_i f$, and not the complex inner product.)

  Similarly, using the fact that $\omega$ is harmonic, we have
  \begin{align*}
    \MoveEqLeft
    -4\pi i\int_{M}(\phi_{0}+\epsilon) f \omega\cdot\nabla\phi_{\omega}
    \\
      & =-2\pi i\int_{M}(\phi_{0}+\epsilon)f\nabla\phi_{\omega}\cdot\omega
    \\
	& \qquad + 2\pi i\int_{M}\phi_{\omega}\paren[\big]{(\phi_{0}+\epsilon)\nabla f+f\nabla\phi_{0}}\cdot\omega+ \int_{\partial M} B_3
    \\
      & =-2\pi i\int_{M}\paren[\big]{(\phi_{0}+\epsilon)\nabla\phi_{\omega}-\phi_{\omega}\nabla\phi_{0}}\cdot(f\omega)
    \\
	& \qquad +2\pi i\int_{M}(\phi_{0}+\epsilon)\phi_{\omega}\nabla f\cdot\omega+\int_{\partial M} B_3 \,.
  \end{align*}
  Combining the above, we have
  \begin{multline}\label{e:comparingEigenvalues}
    \mu_{\omega}-\mu_{0} \int_{M}f\phi_{0}\phi_{\omega}
    = \int_{M}\paren[\big]{(\phi_{0}+\epsilon)\nabla\phi_{\omega}-\phi_{\omega}\nabla\phi_{0}}\cdot\paren[\big]{\nabla f-2\pi if\omega}
    \\
    +\int_{M}(\phi_{0}+\epsilon)\phi_{\omega}\paren[\big]{4\pi^{2}|\omega|^{2}f+2\pi i\nabla f\cdot\omega}+ \int_{\partial M} B_0 \,,
  \end{multline}
  where 
  \begin{equation}\label{e:boundaryTerms}
    B_0 =-\overline{\phi_{\omega}} \partial_\nu \phi_{\omega}
      + \phi_{\omega}f \partial_\nu \phi_{0}
      - 2\pi i (\phi_{0}+\epsilon)\phi_{\omega}f\omega\cdot\nu \,.
  \end{equation}
  The boundary conditions imposed ensure that $B_0 = 0$ on both $\partial_D M$ and $\partial_N M$.

  Since $f=\overline{\phi_{\omega}}/(\phi_{0}+\epsilon)$, we have
  \begin{equation*}
    \nabla f=\frac{(\phi_{0}+\epsilon)\nabla\overline{\phi_{\omega}}-\overline{\phi_{\omega}}\nabla\phi_{0}}{(\phi_{0}+\epsilon)^{2}}.
  \end{equation*}
  Substituting this into the right hand side of \eqref{e:comparingEigenvalues},
  we obtain a perfect square:
  \begin{equation}\label{e:comparingEvalsSquare}
    \mu_{\omega}-\mu_{0}\int_{M}f\phi_{0}\phi_{\omega}=\int_{M} \abs[\Big]{ 2\pi\phi_{\omega}\omega-\frac{i\paren{(\phi_{0}+\epsilon)\nabla\phi_{\omega}-\phi_{\omega}\nabla\phi_{0}}}{\phi_{0}+\epsilon}}^{2} \,.
  \end{equation}
  In particular, 
  \begin{equation*}
    \mu_{\omega}-\mu_{0}\int_{M}f\phi_{0}\phi_{\omega}=\mu_{\omega}-\mu_{0}\int_{M}\frac{\phi_{0}}{\phi_{0}+\epsilon}|\phi_{\omega}|^{2}\geq0.
  \end{equation*}
  Sending $\epsilon\to 0$, we obtain $\mu_{\omega}\geq\mu_{0}$, and so the function $\mathcal{G}\ni\chi\mapsto\lambda_{\chi,0}$ attains global minimum at $\chi=\one$.
  \medskip

  To see that $\chi=\one$ is the unique global minimum point, suppose that $\lambda_\chi = \lambda_0$ for some $\chi \in \mathcal G$.
  Writing $\chi = \chi_\omega$ for some $\omega \in \mathcal H^1_G$, this means $\mu_\omega = \mu_0$.
  Fatou's lemma and~\eqref{e:comparingEvalsSquare} imply
  \begin{align*}
    \MoveEqLeft
    \int_{M}\abs[\Big]{2\pi\phi_{\omega}\omega-\frac{i\paren[\big]{\phi_{0}\nabla\phi_{\omega}-\phi_{\omega}\nabla\phi_{0}}}{\phi_{0}}}^{2}\\
    & \leq\liminf_{\epsilon\to 0}\int_{M}\abs[\Big]{2\pi\phi_{\omega}\omega-\frac{i\paren[\big]{(\phi_{0}+\epsilon)\nabla\phi_{\omega}-\phi_{\omega}\nabla\phi_{0}}}{\phi_{0}+\epsilon}}^{2}\\
    & =\mu_{\omega}-\mu_{0} = 0\,,
  \end{align*}
  by assumption.
  Hence
  \begin{equation}\label{e:integrandInPerfectSquare}
    2\pi\phi_{\omega}\omega-\frac{i\paren{\phi_{0}\nabla\phi_{\omega}-\phi_{\omega}\nabla\phi_{0}}}{\phi_{0}}=0
    \quad\text{in}\ M \,.
  \end{equation}

  Since $\phi_{\omega}=s_{\chi ,0}/\sigma_{\omega}$, we compute
  \begin{equation*}
    \nabla\phi_{\omega}=\frac{\sigma_{\omega}\nabla s_{\chi,0}-2\pi i \sigma_{\omega}s_{\chi,0}\omega}{\sigma_{\omega}^{2}}.
  \end{equation*}
  Substituting this into \eqref{e:integrandInPerfectSquare}, we see
  \begin{equation*}
    \phi_{0}\nabla s_{\chi,0}=s_{\chi,0}\nabla\phi_{0},
  \end{equation*}
  which implies that 
  \begin{equation*}
    \nabla\paren[\Big]{\frac{s_{\chi,0}}{\phi_{0}}}=0.
  \end{equation*}
  Therefore, $s_{\chi,0}=c\phi_{0}$ for some non-zero constant $c$.
  However, the twisting conditions~\eqref{e:twistingCondition} for $\phi_0$ and $s_{\chi, 0}$ require
  \begin{equation*}
    \phi_0( g(x) ) = \phi_0(x)
    \qquad\text{and}\qquad
    s_{\chi, 0}( g(x)) = \chi(g) s_{\chi, 0} (x)\,,
  \end{equation*}
  for every $g \in \mathcal G$.
  This is only possible if $\chi(g)=1$ for all $g \in \mathcal G$, showing $\chi$ is the trivial representation~$\one$. 
\end{proof}

\subsection{Positivity of the Hessian (Proof of Lemma~\ref{l:muBound}).}\label{s:mubound}
In this subsection we prove Lemma~\ref{l:muBound}.
The main difficulty is proving positivity, which we postpone to the end.

\begin{proof}[Proof of Lemma~\ref{l:muBound}]
Given $\omega \in \mathcal H^1_G$, define
  \begin{equation*}
    \varphi_t = \phi_{t \omega}
    \qquad\text{and}\qquad
    h_t = \mu_{t\omega}\,,
  \end{equation*}
  where $\phi_{t \omega} = \phi_{t\omega, 0}$ is the principal eigenfunction of $-H_{t\omega}$ (equation~\eqref{e:phiOmegaJ}) and $\mu_{t\omega}$ is the corresponding principal eigenvalue.
  We claim that
  \begin{equation}\label{e:hder}
    h'_0 = 0\,,
    \quad
    h''_0 = \mathcal I(\omega)
    \quad\text{and}\quad
    \operatorname{Re}\paren{\varphi'_0} = 0\,,
  \end{equation}
  where $h'$, $\varphi'$ denote the derivatives of $h$ and $\varphi$ respectively with respect to~$t$. This will immediately imply that at $\omega=0$ the quadratic form induced by the Hessian of the map $\omega\mapsto\mu_\omega$ is precisely $\mathcal{I}(\omega)$, hence proving~\eqref{e:muBound} in the lemma.
  
  To  establish \eqref{e:hder}, we first note that~\eqref{e:phiOmegaDef} implies
\begin{equation}\label{e:varphit}
-\lap\varphi_{t}-4\pi it\omega\cdot\nabla\varphi_{t}+4\pi^{2}t^{2}|\omega|^{2}\varphi_{t}=h_t\varphi_{t} \,.
\end{equation}
Conjugating both sides of~\eqref{e:varphit} gives
\begin{equation}\label{e:varphiBar}
-\lap\overline{\varphi_{t}}-4\pi i(-t)\omega\cdot\nabla\overline{\varphi_{t}}+4\pi^{2}(-t)^{2}|\omega|^{2}\overline{\varphi_{t}}=h_t\overline{\varphi_{t}} \,.
\end{equation}
In other words, $\overline{\varphi_{t}}$ is an eigenfunction of $-H_{-t\omega}$
with eigenvalue $h_t$.
Since $h_t = \mu_{t\omega}$ is the principal eigenvalue, this implies $h_{-t}\leq h_t$.
By symmetry, we see that $h_{-t}=h_t$, and hence $h'_0=0$.

To see that $\varphi_{0}'$ is purely imaginary, recall $h_t$ is a simple eigenvalue of $-H_{t\omega}$ when $t$ is small.
Thus
\begin{equation}\label{e:phiPrime0imaginary}
\overline{\varphi_{t}}=\zeta_{t}\varphi_{-t} \,,
\end{equation}
for some $S^1$ valued function $\zeta_{t}$,  defined for small $t$.
Changing $t$ to $-t$, we get 
\begin{equation*}
\overline{\varphi_{-t}}=\zeta_{-t}\varphi_{t}=\zeta_{-t}\overline{\zeta_{t}}\overline{\varphi_{-t}} \,.
\end{equation*}
Therefore, $\zeta_{-t}\overline{\zeta_{t}}=1$, which implies that
$\zeta_{-t}=\zeta_{t}$. In particular, $\zeta'_{0}=0$. Differentiating
\eqref{e:phiPrime0imaginary} and using the fact that
$\zeta_{0}=1$, we get 
\begin{equation*}
\overline{\varphi_{0}'}=-\varphi'_{0} \,,
\end{equation*}
showing that $\varphi_{0}'$ is purely imaginary as claimed.

To compute $h''_0$, we differentiate~\eqref{e:varphit} twice with respect
to $t$.
At $t=0$ this gives
\begin{equation}\label{e:phi0prime}
-\lap\varphi_{0}'-4\pi i\omega\cdot\nabla\varphi_{0}=\lambda_{0}\varphi_{0}',
\end{equation}
and
\begin{equation}\label{e:phi0doublePrime}
-\lap\varphi_{0}''-8\pi i\omega\cdot\nabla\varphi_{0}'+8\pi^{2}|\omega|^{2}\phi_{0}=h''_{0}\phi_{0}+\lambda_{0}\varphi_{0}'' \,,
\end{equation}
since $\varphi_0 = \phi_0$.
Multiplying both sides of \eqref{e:phi0doublePrime} by $\phi_{0}$ and integrating over~$M$ gives
\begin{equation}\label{e:hDouplePrime0}
h_{0}''=\int_{M}\paren[\big]{8\pi^{2}|\omega|^{2}\phi_{0}^{2}-8\pi i\phi_{0}\omega\cdot\nabla\varphi_{0}'} \,.
\end{equation}

Recalling that~$\varphi_0'$ is purely imaginary, we let $g_\omega$ be the real valued function defined by $g_\omega = -i \varphi'_0$.
Now equation~\eqref{e:phi0prime} shows that~$g_\omega$ satisfies~\eqref{e:gomega}.
Moreover since~$\varphi_0 = 0$ on $\partial_D M$ and $\nu \cdot \grad \varphi_0 = 0$ on $\partial_N M$, the function~$g_\omega$ satisfies the boundary conditions~\eqref{e:gomegaBC}.
Therefore,~\eqref{e:hDouplePrime0} reduces to~\eqref{e:Idef}, showing that $h''_0 = \mathcal I(\omega)$ as claimed.
\medskip

Finally, we show that $\omega\mapsto\mathcal{I}(\omega)$ defined by~\eqref{e:Idef} is a well defined positive definite quadratic form on $\mathcal{H}^1_G$.  
 To see that $\mathcal I$ is well defined, we first note that in order for~\eqref{e:gomega} to have a solution, we need to verify the solvability condition
  \begin{equation*}
    \int_M \phi_0 \paren[\big]{ 4 \pi \omega \cdot \grad \phi_0 }  = 0\,.
  \end{equation*}
  This is easily verified as 
  \begin{equation}\label{e:intphi0DotStuff}
    \int_{M}\phi_{0}\omega\cdot\nabla\phi_{0}=\frac{1}{2}\int_{M}\omega\cdot\nabla\phi_{0}^{2}=0 \,.
  \end{equation}
  Hence $g_\omega$ is uniquely defined up to the addition of a scalar multiple of $\phi_0$ (the kernel of $\lap + \lambda_0$).
  Now, using~\eqref{e:intphi0DotStuff} again, we see that replacing $g_\omega$ with $g_\omega + \alpha \phi_0$ does not change the value of $\mathcal I(\omega)$.
  Thus, $\mathcal I(\omega)$ is a well defined function.
  The fact that $\mathcal I$ is a quadratic form~\eqref{e:Idef} and the fact that
  \begin{equation*}
    g_{\tau+\omega} = g_{\tau}+g_{\omega}\quad \pmod{\phi_{0}} \,.
  \end{equation*}

  It remains to show that~$\mathcal I$ is positive definite.  Note that, in view of Lemma~\ref{l:minLambdaChi}, we already know that $\mathcal I$ induces a positive \emph{semi}-definite quadratic form on $\mathcal H^1_G$.

  For the convenience of notation, let $g = g_\omega = -i \varphi_0'$ as above.
  As before we write
  \begin{equation*}
    g = (\phi_0 + \epsilon) f_\epsilon\,,
    \quad\text{where }
    f_\epsilon \defeq \frac{g}{\phi_0 + \epsilon}\,,
  \end{equation*}
  and will multiplying both sides of~\eqref{e:gomega} by $(\phi_0 + \epsilon) f_\epsilon$ and integrating.
  In preparation for this we compute
  \begin{multline*}
    -\int_{M}(\phi_{0}+\epsilon)f_{\epsilon}\lap g
      =\int_{M}\nabla g\cdot\paren[\Big]{f_{\epsilon}\nabla\phi_{0}+(\phi_{0}+\epsilon)\nabla f_{\epsilon}}
    \\
      =\lambda_{0}\int_{M}\phi_{0}f_{\epsilon}g-\int_{M}g\nabla f_{\epsilon}\cdot\nabla\phi_{0}+\int_{M}(\phi_{0}+\epsilon)\nabla f_{\epsilon}\cdot\nabla g \,,
  \end{multline*}
  and 
  \begin{align*}
    4\pi\int_{M}(\phi_{0}+\epsilon)f_{\epsilon}\omega\cdot\nabla(\phi_{0}+\epsilon) & =2\pi\int_{M}f_{\epsilon}\omega\cdot\nabla(\phi_{0}+\epsilon)^{2}\\
    & =-2\pi\int_{M}(\phi_{0}+\epsilon)^{2}\nabla f_{\epsilon}\cdot\omega \,.
  \end{align*}
  We remark that when integrating by parts above, the boundary terms that arise all vanish because of the boundary conditions imposed.
  Thus, multiplying~\eqref{e:gomega} by $(\phi_0 + \epsilon) f_\epsilon$ and integrating gives
  \begin{align}
    \nonumber
    \lambda_{0}\int_{M}g^{2}\paren[\Big]{1-\frac{\phi_{0}}{\phi_{0}+\epsilon}}&=  \int_{M}(\phi_{0}+\epsilon)\nabla f_{\epsilon}\cdot\nabla g-\int_{M}g\nabla f_{\epsilon}\cdot\nabla(\phi_{0}+\epsilon)
    \\
    \label{eq: after multiplying g on both sides}
    & \qquad \mathbin{+} 2\pi\int_{M}(\phi_{0}+\epsilon)^{2}\nabla f_{\epsilon}\cdot\omega \,.
  \end{align}

  Writing $\tau\defeq 2\pi\omega$ and adding the integral 
  \begin{align*}
    J_{\epsilon} & \defeq\int_{M}(\phi_{0}+\epsilon)\tau\cdot\nabla g-\int_{M}g\tau\cdot\nabla(\phi_{0}+\epsilon)
    +\int_{M}(\phi_{0}+\epsilon)^{2}|\tau|^{2}
  \end{align*}
  to both sides of \eqref{eq: after multiplying g on both sides}, we obtain
  \begin{multline}\label{eq: after adding J_epsilon on both sides}
    J_{\epsilon}+\lambda_{0}\int_{M}g^{2}\paren[\Big]{1-\frac{\phi_{0}}{\phi_{0}+\epsilon}}
      = \int_{M}(\phi_{0}+\epsilon)(\nabla f_{\epsilon}+\tau)\cdot\nabla g
    \\
      -\int_{M}g(\nabla f_{\epsilon}+\tau)\cdot\nabla(\phi_{0}+\epsilon)
      +\int_{M}(\phi_{0}+\epsilon)^{2}(\nabla f_{\epsilon}+\tau)\cdot\tau \,.
  \end{multline}
  Now, since $g=(\phi_{0}+\epsilon)f_{\epsilon}$, we compute
  \begin{equation*}
    \nabla g=f_{\epsilon}\nabla(\phi_{0}+\epsilon)+(\phi_{0}+\epsilon)\nabla f_{\epsilon} \,.
  \end{equation*}
  Substituting this into \eqref{eq: after adding J_epsilon on both sides} gives
  \begin{equation}\label{e:positivityOfI1}
    J_{\epsilon}+\lambda_{0}\int_{M}g^{2}\paren[\Big]{1-\frac{\phi_{0}}{\phi_{0}+\epsilon}}=\int_{M}(\phi_{0}+\epsilon)^{2}|\nabla f_{\epsilon}+\tau|^{2}\geq0 \,.
  \end{equation}

  Using~\eqref{e:Idef} we see
  \begin{equation}
    \mathcal{I}(\omega)
    = 8\pi^{2}\int_{M}|\omega|^{2}\phi_{0}^{2}+4\pi\int_{M}\phi_{0}\omega\cdot\nabla g -4\pi\int_{M}g\omega\cdot\nabla\phi_{0} \,,
  \end{equation}
  and hence it follows that
  \begin{equation*}
    \lim_{\epsilon\to 0}J_{\epsilon}=\frac{1}{2}\mathcal{I}(\omega) \,.
  \end{equation*}
  Also by the dominated convergence theorem, the second term on the left hand side of \eqref{e:positivityOfI1} goes to zero as $\epsilon\to 0$.
  This shows $\mathcal{I}(\omega)\geq0$.

  It remains to show $\mathcal I(\omega) > 0$ if $\omega \neq 0$.
  Note that if $\mathcal{I}(\omega)=0$, then Fatou's lemma and~\eqref{e:positivityOfI1} imply
  \begin{equation*}
    \int_{M}\phi_{0}^{2}|\nabla f+\tau|^{2}\leq\liminf_{\epsilon\downarrow0}\paren[\Big]{J_{\epsilon}+\lambda_{0}\int_{M}g^{2}\paren[\Big]{1-\frac{\phi_{0}}{\phi_{0}+\epsilon}}}=0 \,,
  \end{equation*}
  where $f\defeq g/\phi_{0}$.
  Therefore $\grad f + \tau = 0$ in $M$ and hence $\omega = - \grad f / (2\pi)$.
  Since $\omega \in \mathcal H^1_G \subseteq \mathcal H^1$, this forces
  \begin{equation*}
    \lap f = 0 \quad\text{in }M\,,
    \qquad\text{and}\qquad
    \nu \cdot \grad f = 0 \quad\text{on } \partial M\,.
  \end{equation*}
  Consequently $\grad f = 0$, which in turn implies~$\omega = 0$. This completes the proof of the positivity of $\mathcal{I}$.
\end{proof}

\section{Proof of the Winding Number Asymptotics (Theorem~\ref{t:winding}).}\label{s:pfwinding}

In this section, we study the long time behaviour of the abelianized winding number of reflected Brownian motion on a manifold~$M$.
We begin by using Theorem~\ref{t:hker} to prove Theorem~\ref{t:winding} (Section~\ref{s:windingProof}).
Next, in Section~\ref{s:tobyWerner} we discuss the connection of our results with those obtained by Toby and Werner~\cite{TobyWerner95}.
Finally, in Section~\ref{s:windingDomain}, we outline a direct probabilistic proof of Theorem~\ref{t:winding}.

\subsection{Proof of Theorem~\ref{t:winding}}\label{s:windingProof}
We obtain the long time behaviour of the abelianized winding of reflected Brownian motion in $M$ by applying Theorem~\ref{t:hker} in this context.
Let $\hat{M}$ be a covering space of $M$ with deck transformation group%
\footnote{
  The existence of such a cover is easily established by taking the quotient of the universal cover $\bar{M}$ by the action of the commutator of $\pi_1(M)$.
}
$\pi_1(M)_{\ab}$.
In view of the covering factorization~\eqref{e:torsionFactorization}, we may, without loss of generality, assume that $\tor(\pi_1(M)_{\ab})=\{0\}$.
Note that since the deck transformation group $G = \pi_1(M)_\ab$ by construction, we have $\mathcal H^1_G = \mathcal H^1$.
Given $n \in \Z^k$ ($k=\rm{rank}(G)$), define $g_n \in G$ by
\begin{equation*}
  g_n \defeq \sum_{i=1}^k n_i \pi_G(\gamma_i)\,,
  \quad\text{where } n = (n_1, \dots, n_k) \in \Z^k\,.
\end{equation*}
Here $(\pi_G(\gamma_1), \dots, \pi_G(\gamma_k))$ is the basis of $G$ chosen in Section~\ref{s:winding}.
Clearly $n \mapsto g_n$ is an isomorphism between $G$ and $\Z^k$.

\begin{lemma}\label{l:dIgn}
  For any $x, y \in \hat M$ and $n \in \Z^k$ we have
  \begin{equation*}
    d_\mathcal I(x, g_n(y) )^2 = (A^{-1}n) \cdot n + O(\abs{n})\,.
  \end{equation*}
  Here $A$ is the matrix $(a_{i,j})$ defined by
  \begin{equation}\label{e:cov}
    a_{i,j}
      \defeq \ip{\omega_i, \omega_j}_\mathcal I
      = \frac{8 \pi^2}{\vol(M)} \int_M \omega_i \cdot \omega_j  \,.
  \end{equation}
\end{lemma}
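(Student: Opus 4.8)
The plan is to reduce the computation of $d_\mathcal I(x, g_n(y))^2$ to a finite-dimensional quadratic optimization by unwinding the definitions. Recall from~\eqref{e:dIDef} that $d_\mathcal I(x, g_n(y)) = \norm{\xi_{x, g_n(y)}}_{\mathcal I^*}$, where $\xi_{x, g_n(y)}(\omega) = \int_x^{g_n(y)} \bpi^*(\omega)$. The first step is to decompose this path integral: fixing a path from $x$ to $y$ and then appending a path from $y$ to $g_n(y)$, we get
\begin{equation*}
  \xi_{x, g_n(y)}(\omega) = \xi_{x,y}(\omega) + \int_y^{g_n(y)} \bpi^*(\omega)\,.
\end{equation*}
By the footnote isomorphism $\omega \mapsto \varphi_\omega$ (and the fact that $\mathcal H^1_G = \mathcal H^1$ in this torsion-free setting), the second term equals $\varphi_\omega(g_n) = \sum_i n_i \varphi_\omega(\pi_G(\gamma_i)) = \sum_i n_i \int_{\gamma_i} \omega$. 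Since $(\omega_1, \dots, \omega_k)$ is the basis of $\mathcal H^1_\Z$ dual to the $\gamma_i$ in the sense of~\eqref{e:omegaiDef}, writing $\omega = \sum_j c_j \omega_j$ gives $\int_{\gamma_i}\omega = c_i$, so the second term is exactly $\sum_i n_i c_i = n \cdot c$. Thus $\xi_{x, g_n(y)} = \xi_{x,y} + \ell_n$ as elements of $(\mathcal H^1_G)^*$, where $\ell_n$ is the linear functional $\omega \mapsto n \cdot c$ whose coordinate vector (with respect to the basis dual to $\omega_1,\dots,\omega_k$) is precisely $n$.

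The second step is to evaluate the dual norm. With respect to the basis $(\omega_1, \dots, \omega_k)$ of $\mathcal H^1_G$, the inner product $\ip{\cdot,\cdot}_\mathcal I$ has Gram matrix $A = (a_{i,j})$ with $a_{i,j} = \ip{\omega_i,\omega_j}_\mathcal I$; by Remark~\ref{r:neumann} (equation~\eqref{e:INeumann}) this equals $\frac{8\pi^2}{\vol M}\int_M \omega_i\cdot\omega_j$, giving the stated formula~\eqref{e:cov}. The induced norm on the dual space, expressed in the dual basis, has Gram matrix $A^{-1}$. Hence for a dual vector with coordinate vector $\eta \in \R^k$ we have $\norm{\eta}_{\mathcal I^*}^2 = \eta^\top A^{-1} \eta$. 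Applying this with $\eta = \eta_0 + n$, where $\eta_0$ is the coordinate vector of $\xi_{x,y}$, we get
\begin{equation*}
  d_\mathcal I(x, g_n(y))^2 = (\eta_0 + n)^\top A^{-1} (\eta_0 + n) = (A^{-1}n)\cdot n + 2(A^{-1}\eta_0)\cdot n + (A^{-1}\eta_0)\cdot\eta_0\,.
\end{equation*}
The cross term is $O(\abs{n})$ and the constant term is $O(1) = O(\abs{n})$ (for $n\neq 0$; the $n=0$ case is trivial), which yields the claim.

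I do not expect any serious obstacle here — the lemma is essentially a bookkeeping exercise unwinding the definitions of $\xi$, the dual norm, and the isomorphism $\mathcal H^1_G \cong \hom(G,\R)$. The one point that needs a little care is verifying that the coordinate vector of $\ell_n$ in the dual basis is exactly $n$ (as opposed to $An$ or $A^{-1}n$); this hinges on correctly matching the duality pairing $\gamma_i \leftrightarrow \omega_i$ from~\eqref{e:omegaiDef} with the evaluation map, and is where a sign or transpose error would most likely creep in. A secondary minor point is confirming that $\eta_0$, the coordinate vector of $\xi_{x,y}$, is bounded independently of $n$ (it is, since it depends only on $x$ and $y$), so that the error terms are genuinely $O(\abs{n})$ as stated.
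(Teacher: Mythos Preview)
Your proposal is correct and follows essentially the same route as the paper: both split $\xi_{x,g_n(y)} = \xi_{x,y} + \psi_{g_n}$, identify the coordinate vector of $\psi_{g_n}$ in the dual basis as $n$ via the duality~\eqref{e:omegaiDef}, and then expand the dual-norm quadratic form using the inverse Gram matrix $A^{-1}$. The only cosmetic difference is that the paper phrases the computation in terms of the functionals $\psi_{\pi_G(\gamma_i)}$ rather than passing explicitly to coordinate vectors.
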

\begin{proof}
  Given $\omega \in \mathcal H^1$ we compute
  \begin{equation}\label{e:xix:gy}
    \xi_{x, g_n (y)}(\omega)
      = \int_{x}^y \bpi^*(\omega) + \int_y^{g_n(y)} \bpi^*(\omega) \,,
  \end{equation}
  where the integrals are performed along any smooth path in~$\hat M$ connecting the endpoints.
  By construction of $\hat M$,  $\mathcal H^1_G = \mathcal H^1$, and hence both integrals above are independent of the path of integration.
  Moreover, the second integral is independent of $y$.
  Hence, if for any $g \in G$ we define $\psi_g\colon \mathcal H^1 \to \R$ by
  \begin{equation*}
    \psi_g(\omega) = \int_y^{g(y)} \bpi^*(\omega) \,,
  \end{equation*}
  then~\eqref{e:xix:gy} becomes
  \begin{equation*}
    \xi_{x, g_n(y)} (\omega) = \xi_{x,y}(\omega) + \psi_{g_n}(\omega)\,.
  \end{equation*}
  From this we compute
  \begin{equation*}
    d_\mathcal I(x,g_n(y))^2
      = d_\mathcal I(x,y)^2
	+ \sum_{i=1}^k n_i \ip{\psi_{\pi_G(\gamma_i)}, \xi_{x,y} }_{\mathcal I^*}
	+ \sum_{i,j=1}^k n_i n_i \ip{\pi_G(\gamma_i), \pi_G(\gamma_j)}_{\mathcal I^*}\,.
  \end{equation*}
  Since $(\omega_1, \dots, \omega_k)$ is the dual basis to $(\pi_G(\gamma_1), \dots, \pi_G(\gamma_j))$, we have
  \begin{equation*}
    \ip{\pi_G(\gamma_i), \pi_G(\gamma_j)}_{\mathcal I^*} = (A^{-1})_{i,j}\,,
  \end{equation*}
  from which the first equality in~\eqref{e:cov} follows.
  The second equality follows from the fact that~\eqref{e:INeumann} holds under Neumann boundary conditions (Remark~\ref{r:neumann}).
\end{proof}

Now we prove Theorem~\ref{t:winding}.

\begin{proof}[Proof of Theorem~\ref{t:winding}]
  Recall in Section~\ref{s:winding} we decomposed the universal cover $\bar M$ as the disjoint union of fundamental domains $\bar U_g$ indexed by $g \in \pi_1(M)$.
  Projecting these domains to the cover $\hat M$ we write $\hat M$ as the disjoint union of fundamental domains $\bar U_g$ indexed by $g \in G$.
  Let $\hat W$ be the lift of the trajectory of~$W$ to $\hat M$, and observe that if $\hat W(t) \in \hat U_{g_n}$, then $\rho(t) = n$.
  
  We use this to compute the characteristic function of $\rho(t) / \sqrt{t}$ as follows.
  Since the generator of $\hat W$ is $\frac{1}{2} \lap$, its transition density is given by $\hat H(t/2, \cdot, \cdot)$.
  Hence, for any $z \in \R^k$ we have
  \begin{multline*}
    \E^x \exp\paren[\Big]{ \frac{i z \cdot \rho(t)}{t^{1/2}} }
      = \sum_{n \in \Z^k}
	  \exp\paren[\Big]{ \frac{i z \cdot n}{t^{1/2}} }
	  \P^x( \hat W(t) \in \hat U_{g_n} )
    \\
      = \sum_{n \in \Z^k} \int_{\hat U_{g_n}}
	  \hat H\paren[\Big]{ \frac{t}{2}, x, y }
	  \exp\paren[\Big]{ \frac{i z \cdot n}{ t^{1/2} } }
	  \, dy\,.
  \end{multline*}
  By Theorem~\ref{t:hker} and Remark~\ref{r:neumann}, this means that uniformly in $x \in \hat M$ we have
  \begin{align*}
    \MoveEqLeft
    \lim_{t\to \infty}
      \E^x \exp\paren[\Big]{ \frac{i z \cdot \rho(t)}{t^{1/2}} }
    \\
      &= C_\mathcal I \lim_{t\to \infty}
	  \sum_{n \in \Z^k} \int_{\hat U_{g_n}}
	      \frac{2^{k/2}}{t^{k/2}}
	      \exp\paren[\Big]{ -\frac{4\pi^2 d_\mathcal I(x, g_n(y))^2 }{t} + \frac{i z \cdot n}{ t^{1/2} } } \, dy
    \\
      &= C_\mathcal I \lim_{t\to \infty}
	  \sum_{n \in \Z^k}
	      \frac{2^{k/2}}{t^{k/2}}
	      \exp\paren[\Big]{ -\frac{4\pi^2 (A^{-1} n) \cdot n}{t} + \frac{i z \cdot n}{ t^{1/2} } }\,.
  \end{align*}
  Here the last equality followed from Lemma~\ref{l:dIgn} above.
  Now the last term is the Riemann sum of a standard Gaussian integral, and hence
  \begin{equation*}
    \lim_{t\to \infty}
      \E^x \exp\paren[\Big]{ \frac{i z \cdot \rho(t)}{t^{1/2}} }
    = 2^{k/2} C_\mathcal I \int_{\zeta \in \R^k} 
      \exp\paren[\Big]{ -4\pi^2 (A^{-1} \zeta) \cdot \zeta + i z \cdot \zeta  } \, d\zeta\,.
  \end{equation*}
  This shows that as $t \to \infty$, $\rho(t) / \sqrt{t}$ converges to a normally distributed random variable with mean $0$ and covariance matrix $A / (8 \pi^2)$.
  By~\eqref{e:sigmadef} and~\eqref{e:cov} we see that $\Sigma = A / (8 \pi^2 )$, which completes the proof of the second assertion in~\eqref{e:rhoLim}.
  The first assertion follows immediately from the second assertion and Chebychev's inequality.
  This completes the proof of Theorem~\ref{t:winding}.
\end{proof}
\subsection{Relation to the Work of Toby and Werner}\label{s:tobyWerner}

Toby and Werner~\cite{TobyWerner95} studied the long time behaviour of the winding of an obliquely reflected Brownian motion in bounded planar domains.
In this case, we describe their result and relate it to Theorem~\ref{t:winding}.

Let $\Omega \subseteq \R^2$ be a bounded domain with $k$ holes $V_1,\cdots,V_k$ of positive volume.
Let $W_{t}$ be a reflected Brownian motion in $\Omega$ with a non-tangential reflecting vector field $u \in C^1(\partial \Omega)$.
Let $p_{1},\cdots,p_{k}$ be $k$ distinct points in $\R^{2}$.
For $1\leq j\leq k$, define $\rho(t,p_{j})$ to be the winding number of $W_{t}$ with respect to the point $p_{j}$. 

\begin{theorem}[Toby and Werner, 1995]\label{t:tobyWerner}
There exist constants $a_i$, $b_i$, depending on the domain $\Omega$, such that 
\begin{equation}\label{e:TWrho}
  \frac{1}{t}\paren[\big]{\rho(t,p_{1}),\cdots,\rho(t,p_{k})}
    \xrightarrow[t \to \infty]{w}
    \paren[\big]{a_{1}C_{1}+b_{1},\cdots,a_{k}C_{k}+b_{k}}\,.
\end{equation}
Here $C_{1}$, \dots, $C_{k}$ are standard Cauchy variables.
Moreover, for any $j$ such that $p_{j}\notin \Omega$, we must have $a_{j}=0$.
\end{theorem}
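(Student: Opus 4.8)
The plan is to treat each $p_j$ separately and split the winding number $\rho(t,p_j)$ into the winding accumulated while the trajectory is \emph{far} from $p_j$ and the winding accumulated on \emph{excursions near} $p_j$; I expect the far part to produce the deterministic drift $b_j$ and the near part the Cauchy fluctuation $a_jC_j$. So I would fix $\epsilon>0$ small enough that the closed disks $\overline{B(p_j,2\epsilon)}$ are pairwise disjoint, and contained in $\Omega$ whenever $p_j\in\Omega$. When $p_j\notin\overline\Omega$ the obliquely reflected trajectory stays a fixed positive distance from $p_j$, so only the far part is present, and that is exactly what will force $a_j=0$.

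For the far part I would write the infinitesimal winding around $p_j$ as $\alpha_j\cdot dW$, where $\alpha_j=d\arg(z-p_j)$ is the closed angular $1$-form, which is smooth and bounded on $\{\,|z-p_j|\geq\epsilon\,\}$. Using the Skorokhod decomposition $dW_t=dB_t+u(W_t)\,dL_t$ of the obliquely reflected motion ($B$ a planar Brownian motion, $L$ the boundary local time), the portion of the winding collected while $|W-p_j|\geq\epsilon$ is a local martingale with at most linearly growing quadratic variation, plus the additive functional $\int_0^t\mathbf{1}_{\{|W_s-p_j|\geq\epsilon\}}\,\alpha_j(W_s)\cdot u(W_s)\,dL_s$. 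Since obliquely reflected Brownian motion in a bounded domain is a nondegenerate positive recurrent diffusion, the ergodic theorem for additive functionals of the boundary local time should give that this functional, divided by $t$, converges almost surely to a constant, while the martingale divided by $t$ vanishes; under \emph{oblique} reflection this constant is generically nonzero because the tangential component of $u$ drags $W$ around $p_j$. When $p_j\notin\overline\Omega$ this already gives $\rho(t,p_j)/t\to b_j$ almost surely, hence $a_j=0$.

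For the near part (when $p_j\in\Omega$) I would set up a regeneration structure from the successive crossings of $W$ between the circles $\{\,|z-p_j|=\epsilon/2\,\}$ and $\{\,|z-p_j|=\epsilon\,\}$: on each inward-to-outward passage the process is an \emph{unreflected} planar Brownian motion, so by the strong Markov property the winding increments $\xi_1,\xi_2,\dots$ collected on successive passages are i.i.d.\ (after an initial transient), and the number $N_t$ of completed passages by time $t$ satisfies $N_t/t\to c_{j,\epsilon}>0$ almost surely (each cycle has finite mean duration because $\Omega$ is bounded). The key input is the tail of one increment: by the scale function $\log r$ of the radial motion, a passage reaches depth $\epsilon e^{-x}$ with probability of order $1/x$, and, via Spitzer's skew-product representation, the winding collected on such a passage is, conditionally, approximately Gaussian with standard deviation of order $x$; combining these gives $\P(|\xi_i|>n)\sim c_j/n$, placing each $\xi_i$ in the domain of attraction of the symmetric stable law of index one. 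Then $t^{-1}\sum_{i\leq N_t}\xi_i$ converges weakly to $a_jC_j$ with $C_j$ standard Cauchy, and adding the far part yields $\rho(t,p_j)/t\xrightarrow{w}a_jC_j+b_j$.

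For the joint statement I would observe that, at the fixed small $\epsilon$, the regeneration systems attached to the disjoint disks $B(p_j,\epsilon)$ produce asymptotically independent index-one stable contributions while the drifts $b_j$ are deterministic, and that consistency of $a_j,b_j$ across different choices of $\epsilon$ is automatic from uniqueness of the weak limit. I expect the hardest step to be the near-$p_j$ analysis: setting up the regeneration structure so that the ``transit'' pieces of the path (between the annuli and the bulk, and among distinct disks) do not corrupt the i.i.d.\ decomposition, establishing the exact $1/n$ tail with its constant, and handling the delicate features of the $\alpha=1$ stable limit theorem. Identifying $a_j$ and $b_j$ explicitly in terms of $u$ and the harmonic data of $\Omega$ would be a further, more technical computation, naturally organized via the conformal invariance of planar Brownian motion.
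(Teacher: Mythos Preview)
The paper does not prove this theorem at all: it is quoted as a result of Toby and Werner~\cite{TobyWerner95} and is only used as context for the authors' own Theorem~\ref{t:winding}. There is therefore no ``paper's own proof'' to compare against; the only related argument the authors give is the short Proposition~\ref{p:TWb}, which computes the constants $b_j$ in the special case of normal reflection with $p_j\in V_j$ and shows they vanish.

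Your sketch is a reasonable outline of the standard approach to results of this type, and in spirit it is close to what Toby and Werner actually do: decompose the winding via the semimartingale representation~\eqref{e:smgRepBM} into a boundary local-time functional (yielding the drift $b_j$ via ergodicity, exactly the formula~\eqref{e:TWb} the paper records) and a martingale part whose long-time behaviour is analysed through the skew-product representation of planar Brownian motion. One difference worth noting is that the original argument leans directly on the skew-product identity $\theta_j(t)=\gamma\bigl(\int_0^t |W_s-p_j|^{-2}\,ds\bigr)$ for a one-dimensional Brownian motion $\gamma$, rather than on a discrete regeneration scheme with i.i.d.\ excursion windings; the Cauchy limit then comes from the asymptotics of the clock $\int_0^t|W_s-p_j|^{-2}\,ds$ rather than from a stable limit theorem for sums. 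Your excursion/regeneration route should also work, but, as you anticipate, controlling the transit pieces and extracting the exact $1/n$ tail constant is where the real work lies.
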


When $p_{j}\in \Omega$, the process $W$ can wind many times around $p_j$ when it gets close to $p_j$.
This is why the heavy tailed Cauchy distribution arises in Theorem~\ref{t:tobyWerner}, and the limiting process is non-degenerate precisely when each $p_j \in \Omega$.

In the context of Theorem~\ref{t:winding} we require compactness of the domain.
This will only be true when when $p_j \not \in \Omega$ for all $j$, in which case each $a_j = 0$.
We now describe how the constants $b_{j}$ are computed in~\cite{TobyWerner95}.

Recall (see for instance Stroock-Varadhan \cite{StroockVaradhan71}) that reflected Brownian motion has the semi-martingale representation
\begin{equation}\label{e:smgRepBM}
  W_{t}=\beta_{t}+\int_{0}^{t}u(W_{s}) \, dL_{s}\,.
\end{equation}
Here $\beta_{t}$ is a two dimensional Brownian motion, $u$ is the reflecting vector field on $\partial \Omega$, and $L_{t}$ is a continuous increasing process which increases only when $W_{t}\in\partial \Omega$.
We also know that the process $W_{t}$ has a unique invariant measure, which we denote by~$\mu$.
Now, the constants $b_{j}$ are given by 
\begin{equation}\label{e:TWb}
  b_{j}
    = \frac{1}{2\pi}\int_{p \in \Omega}
	\E^{p}\brak[\Big]{\int_{0}^{1}u_{j}(W_{s})dL_{s}} \, d\mu(p) \,,
\end{equation}
where $u_{j} \colon \partial \Omega \to \R$ is defined by 
\begin{equation*}
u_{j}(p)\defeq\frac{u(p) \cdot (p-p_{j})^\perp }{\abs{p-p_{j}}} \,.
\end{equation*}
Above the notation $\perp$ denotes the rotation of a point counter clockwise by an angle of $\pi/2$ about the origin.
That is, if $q = (q_1, q_2)  \in \R^2$, then $q^\perp = (-q_2, q_1)$.

In the case that the reflection is normal, we claim that each $b_j = 0$.
\begin{proposition}\label{p:TWb}
Suppose $W_{t}$ is the normally reflected Brownian motion in $\Omega$, and $p_{j}\in V_{j}$ for each $j$.
Then $b_{j}=0$ for all $j$, and consequently
\begin{equation*}
\lim_{t\rightarrow\infty}\frac{\rho(t, p_{j})}{t} \xrightarrow[t \to \infty]{p} 0\,.
\end{equation*}
\end{proposition}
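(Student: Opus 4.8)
The plan is to prove $b_{j}=0$ for every $j$; the asserted convergence then follows at once. The key input is the classical fact that a \emph{normally} reflected Brownian motion in a smooth bounded domain $\Omega$ is symmetric with respect to normalized Lebesgue measure, so that its unique invariant measure is $d\mu = \vol(\Omega)^{-1}\,dp$. Together with the Revuz correspondence for the boundary local time $L$ --- whose Revuz measure relative to $\mu$ is a positive constant multiple of the surface measure $\sigma$ on $\partial\Omega$ --- stationarity gives, for every continuous $f\colon\partial\Omega\to\R$,
\begin{equation*}
  \int_{\Omega}\E^{p}\brak[\Big]{\int_{0}^{1}f(W_{s})\,dL_{s}}\,d\mu(p)
    = \E_{\mu}\brak[\Big]{\int_{0}^{1}f(W_{s})\,dL_{s}}
    = \frac{c_{\Omega}}{\vol(\Omega)}\int_{\partial\Omega}f\,d\sigma\,,
\end{equation*}
for some constant $c_{\Omega}>0$ depending only on $\Omega$. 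If one prefers to avoid Revuz-measure theory, the same identity follows by applying It\^o's formula to $G(W_{t})$ for a smooth $G$ with $\partial_{\nu}G=f$ on $\partial\Omega$, taking expectations under $\P_{\mu}$, and invoking the divergence theorem on $\Omega$ exactly as in Section~\ref{s:lambdamin}; the solvability condition for that Neumann problem is precisely $\int_{\partial\Omega}f\,d\sigma=0$, which is what we verify next for $f=u_{j}$. In any case, taking $f=u_{j}$ in~\eqref{e:TWb} reduces the claim $b_{j}=0$ to showing $\int_{\partial\Omega}u_{j}\,d\sigma=0$.

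To evaluate this surface integral, write $q=p-p_{j}$, $q^{\perp}=(-q_{2},q_{1})$, and observe that $u_{j}(p)=u(p)\cdot F(p)$ on $\partial\Omega$, where $F(p)\defeq q^{\perp}/\abs{q}$. Since $p_{j}\in V_{j}$ lies in the interior of the $j$-th hole, $p_{j}\notin\bar{\Omega}$, so $F$ is smooth on $\bar{\Omega}$. A one-line computation gives $\nabla\cdot F\equiv 0$ on $\R^{2}\setminus\{p_{j}\}$; indeed $F=\nabla^{\perp}\psi$ for the stream function $\psi(p)\defeq\abs{p-p_{j}}$. Under normal reflection $u=-\nu$ with $\nu$ the outward unit normal of $\Omega$, so the divergence theorem on the multiply-connected domain $\Omega$ yields
\begin{equation*}
  \int_{\partial\Omega}u_{j}\,d\sigma
    = -\int_{\partial\Omega}\nu\cdot F\,d\sigma
    = -\int_{\Omega}\nabla\cdot F\,dp
    = 0\,.
\end{equation*}
Equivalently, $u_{j}$ is, up to sign, the tangential derivative $\partial_{\tau}\psi$ along $\partial\Omega$, whose integral over each closed boundary component vanishes. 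Hence $b_{j}=0$ for all $j$.

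With all $b_{j}=0$, the conclusion follows from Theorem~\ref{t:tobyWerner}: since $p_{j}\in V_{j}$ we have $p_{j}\notin\Omega$, hence $a_{j}=0$ as well, and therefore $\rho(t,p_{j})/t\xrightarrow[t\to\infty]{w}a_{j}C_{j}+b_{j}=0$. Convergence in distribution to the constant $0$ is equivalent to convergence in probability, so $\rho(t,p_{j})/t\xrightarrow[t\to\infty]{p}0$, as claimed. The one genuinely delicate point is the first step --- identifying the invariant measure and the Revuz measure of the boundary local time --- but both are standard for normally reflected Brownian motion on a smooth bounded domain, and the self-contained It\^o-formula derivation sketched above uses nothing beyond the techniques already employed in Section~\ref{s:lambdamin}.
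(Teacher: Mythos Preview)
Your proof is correct. Both your argument and the paper's hinge on the same core computation --- that the vector field $F(p)=(p-p_{j})^{\perp}/|p-p_{j}|$ is divergence-free on $\bar{\Omega}$ (since $p_{j}\notin\bar{\Omega}$), so $\int_{\partial\Omega}u_{j}\,d\sigma=\pm\int_{\partial\Omega}\nu\cdot F\,d\sigma=0$ by the divergence theorem --- but the reduction to this identity is handled differently. The paper introduces the heat equation $\partial_{t}w=\tfrac{1}{2}\Delta w$ with Neumann data $-u_{j}$, uses It\^o's formula to identify $w(t,p)=\E^{p}[\int_{0}^{t}u_{j}(W_{s})\,dL_{s}]$, and then integrates the PDE over $\Omega$ to get $\partial_{t}\int_{\Omega}w=-\tfrac{1}{2}\int_{\partial\Omega}u_{j}=0$, whence $b_{j}\propto\int_{\Omega}w(1,\cdot)=0$. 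You instead invoke stationarity and the Revuz correspondence for the boundary local time to write $b_{j}$ directly as a constant multiple of $\int_{\partial\Omega}u_{j}\,d\sigma$. Your route is shorter and more conceptual, but relies on citing the Revuz measure of $L$ (standard, but external); the paper's PDE argument is more self-contained and, as a byproduct, gives the pointwise representation of $\E^{p}[\int_{0}^{t}u_{j}\,dL_{s}]$. Your observation that $u_{j}=\pm\partial_{\tau}|p-p_{j}|$ is a tangential derivative along $\partial\Omega$, so its integral over each closed boundary component vanishes, is a nice alternative to the divergence-theorem step. One small caution: in your ``It\^o-formula alternative'' you need $\int_{\partial\Omega}u_{j}=0$ \emph{before} you can solve the Neumann problem for $G$, so that variant must be read in the order you indicate (verify the boundary integral first, then build $G$), not as an independent derivation of the Revuz identity.
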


Note that Proposition~\ref{p:TWb} is simply the first assertion in~\eqref{e:rhoLim}, and follows trivially from the second assertion (the central limit theorem).
For completeness, we provide an independent proof of Proposition~\ref{p:TWb} directly using~\eqref{e:TWb}.

\begin{proof}[Direct proof of Proposition~\ref{p:TWb}]
Fix $1\leq j\leq k$. Let $w(t,p)$ be the solution to the
following initial-boundary value problem:
\begin{equation}\label{e:w}
  \left\{
    \begin{alignedat}{2}
      \span
	\partial_t w -\frac{1}{2}\lap w = 0
	  &\qquad& \text{in } (0,\infty)\times \Omega\,,
      \\
      \span
	\nu \cdot \grad w =-u_{j}
	  && \text{on } (0,\infty)\times\partial \Omega\,,
      \\
      \span
	\lim_{t\downarrow0}w(t,\cdot)=0
	  && \text{in}\ \Omega \,,
    \end{alignedat}
  \right.
\end{equation}
where $\nu$ is the outward pointing unit normal on the boundary.
By applying It\^o's formula to the process $[0,t-\epsilon]\ni s\mapsto w(t-s,W_{s})$ and using the semi-martingale representation \eqref{e:smgRepBM} of $W_{t}$, we get
\begin{align*}
w(t,p)-\E^{p}\brak[\Big]{w(\epsilon,W_{t-\epsilon})} & =-\E^{p}\brak[\Big]{\int_{0}^{t-\epsilon} \nu \cdot \grad w (W_{s},t-s)dL_{s}}\\
 & =\E^{p}\brak[\Big]{\int_{0}^{t-\epsilon}u_{j}(W_{s})dL_{s}},
\end{align*}
where in the last identity we have used the fact that $dL_{s}$ is
carried by the set $\{s\geq0:W_{s}\in\partial \Omega\}$. Since $\P(B_{t}\in\partial U)=0$,
sending $\epsilon\downarrow0$ and using the dominated convergence theorem gives
\begin{equation*}
  w(t,p)=\E^{p}\brak[\Big]{\int_{0}^{t}u_{j}(W_{s})dL_{s}} \,.
\end{equation*}

On the other hand, according to Harrison, Landau and Shepp \cite{HarrisonLandauEA85},
Theorem 2.8, the invariant measure $\mu$ of $W_{t}$ is the unique
probability measure on the closure $\bar{\Omega}$ of $\Omega$ that $\mu(\partial \Omega)=0$
and 
\begin{equation*}
  \int_{\Omega}\lap f(p) \, d\mu(p) \leq 0
  \quad
  \text{for all $f\in C^{2}(\bar{\Omega})$ with $\nu \cdot \grad f \leq0$ on $\partial \Omega$.}
\end{equation*}
Stokes' theorem now implies $\mu$ is the normalized Lebesgue measure on $\Omega$.
Consequently,
\begin{equation*}
b_{j}
  =\frac{1}{2\pi\vol (\Omega)}\int_{\Omega}\E^{p}\brak[\Big]{\int_{0}^{1}u_{j}(W_{s}) \, dL_{s}} \, dp
  =\frac{1}{2\pi\vol (\Omega)}\int_{\Omega}w(1,p) \, dp \,.
\end{equation*}

Integrating~\eqref{e:w} over $\Omega$ and using the boundary conditions yields
\begin{align*}
0 & = \partial_t \int_{\Omega}w \, dp-\int_{\Omega}\lap w \, dp\\
 & =\partial_t \int_{\Omega}w \, dp+\int_{\partial \Omega}u_{j}(p) \, dp\\
 & =\partial_t \int_{\Omega}w \, dp-\int_{\partial \Omega} \nu \cdot \frac{(p-p_{j})^\perp}{|p-p_{j}|} \, dp \,.
\end{align*}
Since when $p_{j}\in V_{j}$ the vector field $p\mapsto\sfrac{(p-p_{j})^\perp}{|p-p_{j}|}$  is a divergence free vector field on $\bar{\Omega}$, the last integral above above vanishes.
Thus
\begin{equation*}
 \partial_t \int_{\Omega}w \, dp=0\,,
\end{equation*}
and since $w=0$ when $t=0$, $w = 0$ for all $t \geq 0$, and hence $b_{j}=0$.
\end{proof}

Therefore, in the case with normal reflection and $p_j\in V_j$, the result of Toby and Werner becomes a law of large numbers and Theorem~\ref{t:winding} provides a central limit theorem. In this case, our result is a refinement of Theorem~\ref{t:tobyWerner}.

\begin{remark}
  The setting of Toby and Werner~\cite{TobyWerner95} is more general.
  Namely they study obliquely reflected Brownian motion, and the case of punctured domains (i.e.\ when $z_{j}\in \Omega$) where the limiting behavior is the (heavy tailed) Cauchy distribution. 
\end{remark}

\subsection{A Direct Probabilistic Proof of Theorem~\ref{t:winding}}\label{s:windingDomain}

As mentioned earlier, Theorem~\ref{t:winding} can also be proved directly by using a probabilistic argument.
The proof is particularly simple in the case of Euclidean domains with smooth boundary.
On manifolds, however, there are a few details that need to be verified.
While these are direct generalizations of their Euclidean counterparts, to our best knowledge, they are not readily available in the literature.

First suppose  $\gamma\colon [0, \infty) \to M$ is a smooth path.
Let $\rho(t, \gamma)$ be the $\Z^k$-valued winding number of $\gamma$, as in Definition~\ref{d:winding}.
Namely, let $\bar \gamma$ be the lift of $\gamma$ to the universal cover of $M$, and let $\rho(t, \gamma) = (n_1, \dots, n_k)$ if
\begin{equation*}
  \pi_G\paren[\big]{ \bar{\bm{g}}(\bar \gamma(t))} = \sum_{i=1}^k n_i \pi_G(\gamma_i) \,.
\end{equation*}
By our choice of $(\omega_1, \dots, \omega_k)$ we see that $\rho_i(t, \gamma)$, the $i^\text{th}$ component of~$\rho(t, \gamma)$, is precisely the integer part of $\theta_i(t, \gamma)$, where
\begin{equation}\label{e:gamma}
  \theta_i(t, \gamma)
    \defeq \int_{\gamma([0, t])} \omega_i
    = \int_0^t \omega_i(\gamma(s)) \, \gamma'(s) \, ds\,.
\end{equation}
If $M$ is a planar domain with $k$ holes, and the forms $\omega_i$ are chosen as in Remark~\ref{r:planar}, then $2 \pi \theta_i(t, \gamma)$ is the total angle $\gamma$ winds around the $k^\text{th}$ hole up to time $t$.

In the case that $\gamma$ is not smooth, the theory of rough paths can be used to give meaning to the above path integrals.
Moreover, when~$\gamma$ is the trajectory of a reflected Brownian motion on~$M$, we know that the integral obtained via the theory of rough paths agrees with the Stratonovich integral.
To fix notation, let $W$ be a reflected Brownian motion in $M$, and $\rho(t)=(\rho_{1}(t),\cdots,\rho_{k}(t))$ to be the $\mathbb{Z}^{k}$-valued winding number of $W$ as in Definition~\ref{d:winding}.
Then we must have $\rho_{i}(t)=\lfloor\theta_{i}(t)\rfloor$, where $\theta_{i}(t)$ is the Stratonovich integral
\begin{equation}\label{e:thetaiStrat}
  \theta_{i}(t)=\int_{0}^{t}\omega_{i}(W_{s})\circ dW_{s} \,.
\end{equation}

In Euclidean domains, the long time behaviour of this integral can be obtained as follows.
The key point to note is that the forms $\omega_i$ are chosen to be harmonic in~$M$ and tangential on $\partial M$.
Consequently, using the semi-martingale decomposition~\eqref{e:smgRepBM}, we see that $\theta$ is a martingale with quadratic variation given by
\begin{equation}\label{e:qvTheta}
  \langle\theta_{i},\theta_{j}\rangle_{t}=\int_{0}^{t}\omega_{i}(W_{s})\cdot\omega_{j}(W_{s}) \, ds \, .
\end{equation}
Moreover, by Harrison et.\ al.~\cite{HarrisonLandauEA85}, the unique invariant measure of $W_{t}$ is the normalized volume measure.
Thus, by the ergodic theorem, 
\begin{equation*}
  \lim_{t\rightarrow\infty}\frac{1}{t}\langle\theta_{i},\theta_{j}\rangle_{t}=\frac{1}{{\rm vol}(M)}\int_{M}\omega_{i}\cdot\omega_{j}
\end{equation*}
for almost surely.
Now, the martingale central limit theorem~\cite[Theorem 3.33 and Corollary 3.34]{PavliotisStuart08}, implies
conclude that 
\begin{equation*}
  \frac{\theta_t}{\sqrt{t}} \xrightarrow[w]{t \to \infty} \mathcal N(0, \Sigma)\,,
\end{equation*}
where the covariance matrix $\Sigma$ is given by~\eqref{e:sigmadef}. 
\medskip

In order for the above argument to work on compact Riemannian manifolds, one needs to establish a few of the results used above in this setting.
First, one needs to show the analogue of the semi-martingale decomposition~\eqref{e:smgRepBM} on manifolds with boundary.
While this is a straightforward adaptation of~\cite{StroockVaradhan71}, there is (to the best of our knowledge) no easily available reference.
Next, one needs to use the fact $\omega \in \mathcal H^1$ to show that~$\theta_i$ is a martingale with quadratic variation~\eqref{e:qvTheta}.
This can be done by breaking the Stratonovich integral defining~$\theta_i$ (equation~\eqref{e:thetaiStrat}) into pieces that are entirely contained in local coordinate charts, and using the analogue of~\eqref{e:smgRepBM} together with the fact that $\omega \in \mathcal H^1$.
Now the rest of the proof is the same as in the case of Euclidean domains. 

\section*{Acknowledgements.}
The authors wish to thank Jean-Luc Thiffeault for suggesting this problem to us and many helpful discussions.

\bibliographystyle{halpha-abbrv}
\bibliography{refs}
\end{document}